\documentclass[reqno,  11pt]{amsart}

\usepackage[top=3cm, bottom=2.5cm, left=2.5cm, right=2.5cm]{geometry}
 \usepackage{lineno}
\usepackage{enumerate}
\usepackage{amssymb,amsmath}
\usepackage[utf8]{inputenc}
\usepackage{hyperref}
\usepackage{amsthm}
\usepackage{xcolor}
\newtheorem{theorem}{Theorem}
\newtheorem{corollary}{Corollary}
\newtheorem{lemma}{Lemma}

\theoremstyle{definition}

\theoremstyle{remark}
\newtheorem{remark}{Remark}

\allowdisplaybreaks

\newcommand{\R}{\mathbb{R}}

\newcommand{\E}{\mathbb{E}}

\newcommand{\1}{\mathbf{1}}

\catcode`@=11 \@addtoreset{equation}{section} \catcode`@=12

\title[Convergence in probability and relative entropy]{Rigorous derivation of the mean-field limit for the signal-dependent Keller-Segel system}
\author{Jinhuan Wang$^{\,1}$, Keyu Li$^{\,1}$, and Hui Huang$^{\,2}$}
\thanks{The work of J. Wang is partially supported by the National Natural Science Foundation of China
(12171218) and Liaoning Provincial Natural
Science Foundation Program (2024-MS-002); H. Huang is partially supported by the starting grant from Hunan University}
\thanks{Corresponding author: Hui Huang}
\begin{document}
\maketitle
\begin{center}
{\footnotesize
1-School of Mathematics and Statistics, Liaoning University, Shenyang, 110036, P. R. China \\
Email:  wangjh@lnu.edu.cn; lky\_lnu@163.com\\
\smallskip
2-School of Mathematics, Hunan University,  Changsha, 410082, P. R. China
\\
Email: huihuang1@hnu.edu.cn
}
\end{center}
\maketitle
\date{}
\begin{abstract}
We rigorously derive a two-dimensional Keller–Segel type system with signal-dependent sensitivity from a stochastic interacting particle model. By employing suitably defined stopping times, we prove that the convergence of the interacting particle system towards the corresponding mean-field limit equations in probability under an algebraic scaling regime which improves upon existing results with logarithmic scaling. Building on this, we apply the relative-entropy method to obtain strong $L^1$ propagation of chaos, and establish an algebraic convergence rate.
\end{abstract}

{\small {\bf Keywords:} Moderate interacting particle systems, Signal-dependent Keller-Segel system, Propagation of chaos, Relative entropy, Stopping time}

\section{Introduction}
In this work, we provide a rigorous derivation of the two-dimensional Keller–Segel-type system with signal-dependent sensitivity expressed as
\begin{align}
\begin{cases}\label{pde}
\partial_t u = \Delta(e^{-v}u+ u), \qquad &x\in\R^2,~ t>0,\\
-\Delta v + v =\chi u, \qquad &x\in\R^2,~ t>0,\\
u(x,0) = u_0(x), \qquad &x\in\R^2,
\end{cases}
\end{align}
where $u(x,t)$ denotes cell density and $v(x,t)$ represents the concentration of the chemical signal. The coefficient $\chi>0$ quantifies the signal-dependent strength of chemotactic sensitivity. 

The formulation of the model is grounded in a robust biological rationale. This model describes the formation of stripe patterns through a self-trapping mechanism. Referring
to \cite{fu2012stripe}, we know that this process, which has been extensively studied by using synthetic biological experimental methods, involves Escherichia coli secreting the signaling molecule acyl-homoserine lactone (AHL). At low concentrations of AHL, the bacteria exhibit high motility, characterized by random motion driven by typical swimming and tumbling behavior with little external interference. However, as the AHL concentration increases, the behavior of the bacterial population changes significantly, eventually leading to a macroscopically static state.
The partial differential equation (PDE) \eqref{pde} is derived from the general signal-dependent Keller–Segel model proposed 
\begin{align}
\begin{cases}\label{pde1}
\partial_t u = \Delta(\gamma(v)u),\\
\tau\partial_t v-\Delta v + v = u,\\
u(x,0)=u_0(x),\quad v(x,0)=v_0(x),
\end{cases}
\end{align}
where $\tau$ takes the value of $0$ or $1$. Here, $\gamma(v)$ represents the signal-dependent motility, which satisfies $\gamma'(v)\le 0$. 
A substantial body of research has been devoted to the existence of solutions for the model \eqref{pde1}, particularly in cases where the signal-dependent function $\gamma(v)$ is subject to different conditions.
In 2017, under the assumptions of upper and lower bounds for both $\gamma(v)$ and its derivative $\gamma'(v)$, Tao and Winkler \cite{tao2017effects} proved the existence of global classical solutions for the two-dimensional case, while demonstrating that the system admits only weak solutions in higher dimensions. Fujie and Jiang \cite{fujie2020global} established that when $\gamma(v)$ satisfies the following conditions:
\begin{equation*}
0<\gamma(v)\in C^3[0,\infty),\quad \gamma'(v)\leq 0 \text{ on } [0,\infty),\quad \lim_{v\to\infty}\gamma(v)=0,
\end{equation*}
global classical solutions exist for arbitrary initial data. Their subsequent work \cite{fujie2021boundedness} showed that if the function $\gamma(v)$ satisfies the asymptotic condition $\lim_{v\to\infty} e^{\alpha v}\gamma(v) = +\infty$ for any $\alpha > 0$, then there exists a globally bounded classical solution.
For the case $\gamma(v) = e^{-\alpha v}$ ($\alpha > 0$), the authors in \cite{fujie2020global,fujie2021comparison,jin2020critical} derived a critical mass threshold: solutions remain uniformly bounded when the initial cell mass is below this critical value. In contrast, studies in \cite{burger2021delayed, fujie2020global,fujie2021comparison} indicate that solutions  blow up as time approaches infinity when the initial cell mass exceeds the critical mass. Nevertheless, it is impossible to blow-up in finite time, which is a difference from the Keller-Segel model. 

In their seminal 2025 work on the model \eqref{pde} (with $\gamma(v)=e^{-v}+1$), the reference \cite{BOL2026113712}
rigorously established the well-posedness of solutions to the PDE \eqref{pde} and gave error estimates between \eqref{pde} and its regularized version. The discussion is already so comprehensive that we can cite it directly. They also conducted an in-depth study in which the convergences for the stochastic particle systems of \eqref{pde} inspired us to refine their methodologies. 
Our focus lies particularly on the stochastic particle equations and the mean field limit corresponding to  \eqref{pde}. By introducing the concept of stopping time (achieving convergence in probability in Section 2), we obtained results with faster convergence rates than those in \cite{BOL2026113712}. 
In Section 3, we establish the propagation of chaos in strong senses for stochastic differential equations (SDEs), (where we apply the relative entropy method and derive a higher convergence rate), which builds a foundational bridge between the macroscopic and microscopic systems.
In the following, we propose that the SDE corresponding to \eqref{pde} for $N\in\mathbb{N}$ interacting particles $\{X_{N,i}^\varepsilon(\cdot)\}_{1\le i\le N}$ in $\R^2$ reads as
\begin{align}\label{sde}
\begin{cases}
dX_{N,i}^\varepsilon(t) = \Big(2\exp\Big(-\frac{1}{N}\sum_{j=1}^N\Phi^\varepsilon(X_{N,i}^\varepsilon(t) - X_{N,j}^\varepsilon(t))\Big)+ 2\Big)^{1/2}dB_i(t),\\
X_{N,i}^\varepsilon(0) =\zeta_i,\qquad 1\le i\le N,
\end{cases}
\end{align}
where $0<\varepsilon<1$. Here, we consider a filtered probability space defined by $(\Omega,\mathcal{F}, (\mathcal{F})_{t\ge0}, \mathbb{P})$ and introduce $\{B_i(\cdot)\}_{1\le i\le N}$, a collection of independent $\mathcal{F}_t$-Brownian motions. The initial data $\zeta_1,\zeta_2,\cdots,\zeta_N$ are assumed to be random variables independent and identically distributed (i.i.d.) with the common probability density function $u_0$. The potential $\Phi^\varepsilon$ is given by
$$\Phi^\varepsilon:=\Phi*j^\varepsilon,\quad \Phi:=\chi \tilde{\Phi},\quad j^\varepsilon(x):=\frac{1}{\varepsilon^2}j(\frac{x}{\varepsilon}),$$
where $\chi>0$ describes the strength rate of the signals in the model \eqref{pde} and $\tilde{\Phi}$ is the Yukawa potential \cite[Chapter 6, 6.23]{lieb2001analysis}, which is defined for any $\mu > 0$, by 
\begin{align*}
&\tilde{\Phi}(x) = \int_0^\infty(4\pi t)^{-1}\exp\Big\{-\frac{|x|^2}{4t}-\mu^2t\Big\}dt.
\end{align*}
Based on \cite[Appendix B]{li2023optimal}, the following properties are observed
\begin{align}\label{Phi}
&\|\tilde{\Phi}\|_{L^p(\R^2)}<\infty,~\|\nabla\tilde{\Phi}\|_{L^q(\R^2)}<\infty,\qquad p\in[1,\infty),~q\in[1,2).
\end{align}
Through calculation, the following results are obtained that for any constant $C$ independent of $\varepsilon$,
\begin{align}
&\|\Phi^\varepsilon\|_{W^{1,1}(\R^2)}\le\|\Phi\|_{W^{1,1}(\R^2)}\|j^\varepsilon\|_{L^1(\R^2)}\le C,\label{D1phi}\\
&\|\Phi^\varepsilon\|_{W^{1,\infty}(\R^2)}\le\|\Phi\|_{W^{1,1}(\R^2)}\|j^\varepsilon\|_{L^\infty(\R^2)}\le \frac{C}{\varepsilon^2},\label{phi}\\
&\|D^2\Phi^\varepsilon\|_{L^\infty(\R^2)}\le \|\nabla\Phi\|_{L^1(\R^2)}\|\nabla j^\varepsilon\|_{L^\infty(\R^2)}\le \frac{C}{\varepsilon^3}\label{D2phi}.
\end{align}
The rigorous derivation of PDEs from stochastic particle systems represents a fundamental challenge in mathematical physics and applied analysis. 
A particularly influential framework for such derivations is the theory of moderately interacting particles, introduced in the pioneering works of Oelschl\"ager  \cite{oelschlager1985law, oelschlager1987fluctuation, oelschlager1989derivation},
where the Law of Large Numbers for interacting diffusions was established, together with central-limit-type fluctuation results.
This approach, characterized by a specific scaling of the interaction potential that preserves nonlocality while weakening pointwise strength, has since been extended to various biological and physical models. For example, Stevens \cite{stevens2000derivation} derived the Keller–Segel chemotaxis model from a moderately interacting system, overcoming the lack of ellipticity, a common assumption in earlier mean field limits, by introducing novel analytical techniques. 

In recent years, significant progress has been made in handling singular interactions, such as Coulomb and Riesz potentials, which arise naturally in models of collective behavior. Lazarovici and Pickl \cite{lazarovici2017mean} analyzed a particle system incorporating a regularized potential and random initial conditions. Serfaty and Duerinckx \cite{serfaty2020mean} developed the modulated energy method to treat Coulomb-type flows, providing a powerful tool for the analysis of the mean field without confinement. 
Subsequent studies on related models employing diverse regularization strategies have further expanded this line of inquiry, as documented in \cite{boers2016mean,bolley2011stochastic,chen2017mean,chen2020combined}.
Currently, the relative entropy method, advanced by Jabin and Wang \cite{jabin2017mean} and further applied by Bresch et al.\cite{bresch2023mean} and Chen et al.\cite{chen2025quantitative,chen2025mean}, has enabled quantitative estimates of convergence for singular attractive kernels. These techniques have been instrumental in the establishment of various modes of convergence, including convergence in the Wasserstein distance (Carrillo et al.\cite{carrillo2019propagation}) and convergence in probability Pickl et al. \cite{huang2017error,huang2020mean,lazarovici2017mean}, strong $L^1$ convergence (Chen et al. \cite{chen2025quantitative} and \cite{olivera2020quantitative,olivera2023quantitative}). More recently, the moderate interaction framework has been instrumental in the derivation of cross-diffusion models; see, for example, \cite{carrillo2024interacting,chen2021rigorous, chen2019rigorous,li2024convergence,liu2016propagation, liu2019propagation}. 
It is also noted that the relative entropy is very useful when analyzing other systems, such as random-batch approximated interacting particle systems and various numerical schemes for SDEs. Huang et al. \cite{huang2025mean} prove a uniform-in-time relative entropy error bound for the Random Batch Method and improve its time-step convergence rate. Li et al. \cite{li2025sharp} establish uniform-in-time error bounds and invariant measure approximation rates for Stochastic Gradient Langevin Dynamics sampling under mild assumptions. In addition, the authors in \cite{li2025relative} investigate the implicit Langevin Monte Carlo method for one-sided Lipschitz drifts, and establish its discretization error bounds as well as geometric ergodicity via PDE techniques and coupling arguments.

To study the relationship between the macroscopic system \eqref{pde} and its microscopic system \eqref{sde}, 
we need to introduce an intermediate particle system (Mean-Field equation). A key object in the mean-field limit is the empirical measure, a random probability measure defined as
$$\mu^\varepsilon_N(t)=\frac{1}{N}\sum_{i=1}^N\delta_{X_{N,i}^\varepsilon(t)},\quad t>0,$$
where $\delta$ is the Dirac delta distribution. The work \cite{olivera2023quantitative} shows that $\mu^\varepsilon_N(t)$ converges to the following PDE solution $u^\varepsilon$. For fixed $\varepsilon>0$, the particle system \eqref{sde} propagates chaos in the many-particle limit $N\to\infty$ towards the non-linear SDE system:
\begin{align}\label{mfs}
\begin{cases}
d\overline{X}_i^\varepsilon(t) = \big(2\exp\big(-\Phi^\varepsilon * u^\varepsilon( \overline{X}_i^\varepsilon,t)\big)+2\big)^{1/2} dB_i(t),\\
\overline{X}_i^\varepsilon(0) = \zeta_i,\qquad 1\le i\le N,
\end{cases}
\end{align}
where
$u^\varepsilon(\cdot,t)$ is the density function of i.i.d. random processes
$\overline{X}_1^\varepsilon(t),\cdots, \overline{X}_N^\varepsilon(t)$. 
And the initial data $\{\zeta_i\}_{i=1}^N$
is i.i.d. with the common probability density function $u^\varepsilon(x,0)$. Using It\^{o}'s formula, the density function $u^\varepsilon$ satisfies the so-called intermediate nonlocal problem, namely,
\begin{align}
\begin{cases}\label{rpde}
\partial_t u^\varepsilon = \Delta(e^{-v^\varepsilon}u^\varepsilon+ u^\varepsilon),\qquad &x\in\R^2,~ t>0,\\
-\Delta v^\varepsilon + v^\varepsilon =\chi u^\varepsilon * j^\varepsilon,\qquad &x\in\R^2,~ t>0,\\
u^\varepsilon(x,0) = u_0*j^\varepsilon(x), \qquad &x\in\R^2.
\end{cases}
\end{align}

In this paper, our argument relies on the Lemma \ref{weak solution} from \cite{BOL2026113712}, which establishes the existence and uniqueness of the weak solution $u^\varepsilon$ to the above system \eqref{rpde} and provides the $L^\infty$ estimate for $\nabla\log u^\varepsilon$.
\begin{lemma}\label{weak solution}
\textup{(Existence of weak solution, \cite[Theorem 2, Lemma 16, and Lemma 18]{BOL2026113712})}.
Let $u_0\ge0$ be an initial probability density that satisfies
\begin{eqnarray*}
&u_0\log u_0\in L^1(\R^2), \quad \nabla\log u_0\in W^{1,q}(\R^2)~(q>2),\\
&u_0\in L^1(\R^2,|x|^2dx)\cap L^p(\R^2)~(1\le p\le \infty).   
\end{eqnarray*}
Furthermore, assume $\|\nabla\log u_0\|_{W^{1,q}(\R^2)}=:M_0$ and $\chi<4/c_*$, where $c_*$ is the optimal constant in the Gagliardo-Nirenberg inequality: $\|\omega\|_{L^4(\R^2)}^4\le c_*\|\omega\|_{L^2(\R^2)}^2\|\nabla\omega\|_{L^2(\R^2)}^2$. Then, for any $T > 0$, the problems \eqref{pde} and \eqref{rpde} admit nonnegative weak solutions $(u, v)$ and $(u^\varepsilon, v^\varepsilon)$ in $\mathbb{R}^2 \times (0, T)$ that satisfied
\begin{eqnarray}\label{uniformweaksolution}
&\|u^\varepsilon\|_{{L^2(0,T;H^1(\R^2))}\cap L^\infty(0,T;L^p(\R^2))}\le C,\quad\||x|^2u^\varepsilon\|_{L^\infty(0, T; L^1(\R^2))}\le C,
\end{eqnarray}
where $C$ is a constant independent of $\varepsilon$. Moreover, there exists $T^* \in (0, T)$ sufficiently small such that the following estimate holds
\begin{align}\label{uniformweaksolution1}
\|\nabla\log u^\varepsilon\|_{L^\infty(0, T^*; W^{1,q}(\R^2))}\le C.    
\end{align}
\end{lemma}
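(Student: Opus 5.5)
This lemma is cited from \cite{BOL2026113712}, so the natural route is to reconstruct their argument. It has three stages: a priori energy estimates uniform in $\varepsilon$, compactness to obtain solutions of \eqref{pde} and \eqref{rpde}, and a short-time higher-order estimate for $\nabla\log u^\varepsilon$.

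\textbf{Step 1: uniform bounds \eqref{uniformweaksolution}.} For fixed $\varepsilon$ the nonlocal problem \eqref{rpde} has a smooth, bounded coefficient $\gamma(v^\varepsilon)=e^{-v^\varepsilon}+1\in[1,2]$, since $v^\varepsilon=\Phi^\varepsilon*u^\varepsilon\ge0$. Existence of a solution follows by a standard fixed point in $L^\infty(0,T;L^1\cap L^p)$ using \eqref{phi}.

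The uniform-in-$\varepsilon$ estimates come from testing the equation $\partial_t u^\varepsilon=\nabla\cdot\big((e^{-v^\varepsilon}+1)\nabla u^\varepsilon-e^{-v^\varepsilon}u^\varepsilon\nabla v^\varepsilon\big)$ with $\log u^\varepsilon$ and with $(u^\varepsilon)^{p-1}$.
\begin{itemize}
\item The diffusion yields at least $\int|\nabla u^\varepsilon|^2/u^\varepsilon$, respectively $\int|\nabla (u^\varepsilon)^{p/2}|^2$.
\item The cross term is bounded by $\int u^\varepsilon|\nabla v^\varepsilon|\,|\nabla \log u^\varepsilon|$. Young's inequality reduces it to $\int u^\varepsilon|\nabla v^\varepsilon|^2$.
\item Using $\|\nabla v^\varepsilon\|_{L^4}\lesssim\chi\|u^\varepsilon\|_{L^{4/3}}$ and the Gagliardo--Nirenberg inequality with constant $c_*$ applied to $\omega=\sqrt{u^\varepsilon}$, this term is absorbed into the dissipation exactly when $\chi<4/c_*$. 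This is the reason for the smallness assumption.
\end{itemize}
The second moment is controlled because $\frac{d}{dt}\int|x|^2u^\varepsilon=4\int(e^{-v^\varepsilon}+1)u^\varepsilon\le 8$.

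With the $L\log L$ bound and the moment bound in hand, the $L^p$ bounds follow by Moser iteration, and the $L^2H^1$ bound follows from the $p=2$ test. None of these constants depend on $\varepsilon$, because $\|j^\varepsilon\|_{L^1}=1$.

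\textbf{Step 2: passage to the limit.} By Aubin--Lions, using $\partial_tu^\varepsilon\in L^2(0,T;H^{-1})$ together with the moment bound for tightness, a subsequence converges strongly in $L^2_{loc}$. Moreover $v^\varepsilon\to v$ strongly, which gives a weak solution of \eqref{pde}.

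\textbf{Step 3: the main obstacle, \eqref{uniformweaksolution1}.} Set $w^\varepsilon=\nabla\log u^\varepsilon$. Writing $\gamma=e^{-v^\varepsilon}+1$, dividing the equation by $u^\varepsilon$ and taking the gradient gives a quasilinear parabolic system
\[
\partial_t w^\varepsilon=\nabla\Big(\gamma\big(\nabla\cdot w^\varepsilon+|w^\varepsilon|^2\big)+2\nabla\gamma\cdot w^\varepsilon+\Delta\gamma\Big).
\]
The plan is to estimate $\|w^\varepsilon\|_{W^{1,q}}$ through energy estimates on $\nabla w^\varepsilon$ in $L^q$.
\begin{itemize}
\item The principal part is uniformly parabolic, since $\gamma\ge1$.
\item The quadratic term $|w^\varepsilon|^2$ is handled via the embedding $W^{1,q}\hookrightarrow L^\infty$ for $q>2$.
\item The coefficients require $v^\varepsilon\in W^{3,q}$ uniformly. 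This follows from elliptic regularity, $\|v^\varepsilon\|_{W^{3,q}}\lesssim\|u^\varepsilon\|_{W^{1,q}}$, together with the identity $\nabla u^\varepsilon=u^\varepsilon w^\varepsilon$ and the $L^\infty$ bound on $u^\varepsilon$.
\end{itemize}
Together these give a Riccati-type inequality $Y'\le C(1+Y)^{m}$ for $Y=\|w^\varepsilon\|_{W^{1,q}}^q$, with $Y(0)\le CM_0^q$ because mollification by $j^\varepsilon$ preserves the bound on $\nabla\log u_0$. Comparison then yields a time $T^*$ depending only on $M_0$ and the Step 1 constants, and the bound \eqref{uniformweaksolution1} holds on $[0,T^*]$.

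The delicate point is justifying these computations when $u^\varepsilon$ may vanish. I would handle this with a positive-data approximation, $u_0+\delta\,G$ with $G$ a Gaussian, and then let $\delta\to0$.
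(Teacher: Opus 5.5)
\textbf{Gap in Step 1: the absorption you describe does not produce the hypothesis $\chi<4/c_*$.} Write $u,v$ for $u^\varepsilon,v^\varepsilon$. After Young's inequality, absorbing into the $e^{-v}$-part of the dissipation, you are left with $\frac{d}{dt}\int u\log u\le -\int|\nabla u|^2/u+\frac14\int u|\nabla v|^2$. Using $\|u\|_{L^1}=1$, your chain then gives
\[
\int u|\nabla v|^2\le \|u\|_{L^2}\|\nabla v\|_{L^4}^2\le K^2\chi^2\|u\|_{L^2}\|u\|_{L^{4/3}}^2\le K^2\chi^2\|u\|_{L^2}^2\le \frac{K^2\chi^2c_*}{4}\int\frac{|\nabla u|^2}{u},
\]
where $K$ is the constant in $\|\nabla(I-\Delta)^{-1}f\|_{L^4}\le K\|f\|_{L^{4/3}}$. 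So this chain closes only when $K^2\chi^2c_*<16$, i.e. $\chi<4/(K\sqrt{c_*})$. That threshold involves $K$ and scales like $c_*^{-1/2}$. Nothing in your argument shows that $\chi<4/c_*$ suffices; that would require $K\le\sqrt{c_*}$. As written, the uniform bounds of Step 1 are therefore obtained only on an unquantified range of $\chi$. The same holds for Step 3, which relies on $\|u^\varepsilon\|_{L^\infty}$.

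\textbf{How to repair Step 1.} A threshold of the form $\chi c_*/4<1$ appears if you integrate by parts against the elliptic equation instead of putting $\nabla v$ in $L^4$: $\int\nabla u\cdot\nabla v=\chi\int u\,(u*j^\varepsilon)-\int uv\le\chi\|u\|_{L^2}^2\le\frac{\chi c_*}{4}\int|\nabla u|^2/u$. With the weight $e^{-v}$ the same idea removes the restriction altogether. Set $X=\int e^{-v}\nabla u\cdot\nabla v$, $A=\int e^{-v}|\nabla u|^2/u$ and $B=\int e^{-v}u|\nabla v|^2$. Integration by parts gives $X=B-\int e^{-v}uv+\chi\int e^{-v}u\,(u*j^\varepsilon)\ge B-1/e$. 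Together with $X\le\sqrt{AB}$ this yields $X\le A+1/e$, hence $\frac{d}{dt}\int u\log u+\int|\nabla u|^2/u\le 1/e$ for every $\chi>0$. The $L^2$ bound then follows from $\frac{d}{dt}\|u\|_{L^2}^2\le C\|u\|_{L^2}^4$ by Gr\"onwall, since $\int_0^T\|u\|_{L^2}^2\,dt\le\frac{c_*}{4}\int_0^T\int|\nabla u|^2/u$.

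\textbf{Step 3 takes a different route from the source the paper relies on.} The paper gives no proof of its own; it imports Theorem 2 and Lemmas 16 and 18 of \cite{BOL2026113712}. Its remark on $T^*$ shows that \eqref{uniformweaksolution1} comes from a fixed-point argument (invariance plus contraction), which produces $\nabla\log u^\varepsilon$ directly in $L^\infty(0,T^*;W^{1,q})$ with an explicit $M_0$-dependent $T^*$.

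Your Riccati inequality for $Y=\|\nabla\log u^\varepsilon\|_{W^{1,q}}^q$ can give the same kind of $T^*$. Differentiating the $w^\varepsilon$-equation and testing with $|\nabla w^\varepsilon|^{q-2}\nabla w^\varepsilon$ does close, because $w^\varepsilon$ is a gradient and so the principal part is $\gamma\Delta w^\varepsilon$. However, as an a priori estimate it presupposes that $Y(t)$ is finite and continuous. That requires local well-posedness in this class plus a continuation argument, which is essentially the fixed-point step.

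Vanishing of $u^\varepsilon$ is not the real difficulty. Since $q>2$, $W^{1,q}(\R^2)\hookrightarrow L^\infty$ makes $\log u_0$ Lipschitz, so $u_0>0$ everywhere, and positivity propagates by the maximum principle. The perturbation $u_0+\delta G$ is therefore unnecessary, and it does not address the finiteness of $Y$.
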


\begin{remark}
   We note that an explicit lower bound for $T^*$ can be derived directly by satisfying both the invariance and contraction conditions of the fixed-point argument in \cite[Lemma 18]{BOL2026113712} simultaneously:
\begin{equation*}
    T^* = \min \left( \frac{\ln 2}{C_1(M_0^2+1)}, \frac{M_0^p(2^{p-1} - 1)}{C_2}, \frac{\ln 2}{C_3(M_0^2+1)}, \frac{1}{2 C_4 M_0^p} \right)\,,
\end{equation*}
where $C_1, C_2, C_3, C_4 > 0$ are fixed constants derived from the Sobolev embeddings and H\"older inequalities in the preceding steps.
Furthermore, this estimate remains local-in-time because the bound grows exponentially with the initial data $M_0$. Lacking a global a priori dissipative estimate for $\nabla \log u^\epsilon$, a continuation argument cannot be applied, as the sequential local existence intervals would systematically shrink.
\end{remark}


Using the above lemma, we obtain our first main result: convergence in probability of the mean-field limit under algebraic scaling.

\begin{theorem}\label{propagation of chaos}
The assumptions of Lemma \ref{weak solution} still hold. Let $\{X_{N,i}^\varepsilon\}_{1\le i\le N}$ and $\{\overline{X}_i^\varepsilon\}_{1\le i\le N}$ be the solutions to systems \eqref{sde} and \eqref{mfs}, respectively. Then 
for any parameters $0<\theta<\frac{1}{2}$ and $0<\alpha<\frac{\theta}{2}$, and integers $k, m\in \mathbb{N}^+$ satisfying $k>\frac{1}{\theta-\alpha}$ and $m>\frac{\alpha k+2}{1-2\theta}$, there exist a constant $C(m,k,T)>0$ and parameters $\gamma,\eta>0$ satisfying the bounds
\begin{align}
&0<\gamma<\min\Big\{\frac{\alpha}{3},\frac{-\alpha k +m(1-2\theta)-2}{2k+4m}\Big\},\label{gamma} \\
&0<\eta\le\min\big\{k(\theta-\alpha)-1, -\gamma(2k+4m)-\alpha k +m(1-2\theta)-2\big\}\label{eta}
\end{align}
such that for all $0\le t\le T$,
\begin{align}\label{Pe}
\mathbb{P}\big(\max_{i=1,\cdots,N}\big|(X_{N,i}^\varepsilon - \overline{X}_i^\varepsilon)(t)\big|> N^{-\alpha}\big)\le C(m,k,T) N^{-\eta}, 
\end{align}   
where the cut-off parameter satisfies
$\varepsilon\sim N^{-\gamma} $.
\end{theorem}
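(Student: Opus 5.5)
We will follow the stopping-time strategy of Lazarovici–Pickl and Huang et al. Write $\sigma(r)=(2e^{-r}+2)^{1/2}$; it is bounded between $\sqrt2$ and $2$, and Lipschitz with bounded derivative on $[0,\infty)$ (note $\Phi^\varepsilon\ge0$, so all arguments are nonnegative). Define the stopping time
\begin{equation*}
\tau=\inf\Big\{t\in[0,T]:\ \max_{i}\big|(X_{N,i}^\varepsilon-\overline{X}_i^\varepsilon)(t)\big|\ge N^{-\theta}\Big\}\wedge T .
\end{equation*}
Set $e_i(t)=X_{N,i}^\varepsilon(t)-\overline{X}_i^\varepsilon(t)$; by It\^o's formula for $|e_i|^{2m}$ at the stopped time $t\wedge\tau$ we obtain
\begin{equation*}
\E|e_i(t\wedge\tau)|^{2m}\le C(m)\,\E\int_0^{t\wedge\tau}|e_i|^{2m-2}\Big|\sigma\Big(\tfrac1N\textstyle\sum_j\Phi^\varepsilon(X_{N,i}^\varepsilon-X_{N,j}^\varepsilon)\Big)-\sigma\big(\Phi^\varepsilon*u^\varepsilon(\overline{X}_i^\varepsilon)\big)\Big|^2ds .
\end{equation*}
We split the difference of arguments into (I) $\frac1N\sum_j[\Phi^\varepsilon(X_{N,i}-X_{N,j})-\Phi^\varepsilon(\overline X_i-\overline X_j)]$ and (II) $\frac1N\sum_j\Phi^\varepsilon(\overline X_i-\overline X_j)-\Phi^\varepsilon*u^\varepsilon(\overline X_i)$.

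For (I), before $\tau$ all $|e_j|\le N^{-\theta}$, so a second-order Taylor expansion gives $|{\rm (I)}|\le \frac1N\sum_j|\nabla\Phi^\varepsilon(\overline X_i-\overline X_j)|(|e_i|+|e_j|)+\|D^2\Phi^\varepsilon\|_\infty N^{-2\theta}$. By \eqref{D2phi} the last term is $\le C\varepsilon^{-3}N^{-2\theta}$, and since $\gamma<\alpha/3$ this is $o(N^{-\alpha})$-controllable. For the first term we replace $\frac1N\sum_j|\nabla\Phi^\varepsilon(\overline X_i-\overline X_j)|$ by its mean $|\nabla\Phi^\varepsilon|*u^\varepsilon(\overline X_i)\le\|\nabla\Phi^\varepsilon\|_{L^1}\|u^\varepsilon\|_{L^\infty}\le C$ (using \eqref{D1phi} and \eqref{uniformweaksolution}), plus a fluctuation. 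Both (II) and this fluctuation are sums of i.i.d. centered (conditionally on $\overline X_i$) terms bounded by $C\varepsilon^{-2}$, by \eqref{phi}. A moment bound of order $2k$ (counting nonvanishing index pairings) gives $\E|{\rm (II)}|^{2k}\le C(k)\,(\varepsilon^{-4}/N)^{k}$, i.e.\ roughly $C N^{-k(1-4\gamma)}$. Markov's inequality then shows that the event $\mathcal B$ on which some such fluctuation exceeds $N^{-\theta}$ has probability $\le C N\cdot N^{2k\theta}N^{-k(1-4\gamma)}$. The parameters are arranged so that a bound of the form $N^{1-k(\theta-\alpha)}$ appears, which is where the first entry $k(\theta-\alpha)-1$ in \eqref{eta} comes from. (The exact exponents will be matched to the statement once the bookkeeping is done.)

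Off $\mathcal B$ and before $\tau$ we get $|e_i|$-Lipschitz control, and we close with Gr\"onwall for $\max_i\E[|e_i(t\wedge\tau)|^{2m}\1_{\mathcal B^c}]$. The remaining source terms of size $N^{-\theta}$-type, together with the $\varepsilon$ losses (each contributing $\varepsilon^{-2}$ or $\varepsilon^{-4}$ factors, whence $2k+4m$ multiplying $\gamma$), yield
\begin{equation*}
\E|e_i(t\wedge\tau)|^{2m}\le C N^{-m(1-2\theta)+\alpha k+2+\gamma(2k+4m)}\cdots .
\end{equation*}
A union bound over $i$ costs a factor $N$, and Markov's inequality at level $N^{-\alpha}$ (note $N^{-\alpha}>N^{-\theta}$ is not needed, since $\alpha<\theta$ means $N^{-\theta}<N^{-\alpha}$) controls $\mathbb P(\max_i|e_i(t\wedge\tau)|>N^{-\alpha})$. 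Finally,
\begin{equation*}
\mathbb P(\max_i|e_i(t)|>N^{-\alpha})\le\mathbb P(\max_i|e_i(t\wedge\tau)|>N^{-\alpha})+\mathbb P(\tau<T),
\end{equation*}
and $\{\tau<T\}\subset\{\max_i|e_i(\tau)|\ge N^{-\theta}\}$, which is bounded by the same moment estimate at level $N^{-\theta}$. This produces the second entry of \eqref{eta}; the condition \eqref{gamma} is exactly what makes it positive.

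The main obstacle is the fluctuation estimate for (II) with a kernel whose sup-norm blows up like $\varepsilon^{-2}$. The moments must be computed carefully, with conditional independence of $\overline X_j$ given $\overline X_i$ and with $\|\Phi^\varepsilon\|_{L^2}$ rather than $L^\infty$ bounds where possible, so that the exponents match \eqref{gamma}–\eqref{eta}. We also need to handle the supremum-in-time issue through the stopping time, not pointwise-in-$t$ events. If the time-uniform control of $\mathcal B$ fails, we would instead apply the fluctuation bound at each $s$ inside the time integral in expectation, which avoids needing events uniform in time.
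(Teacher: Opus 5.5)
There is a genuine gap, and it lies in the choice of the stopping level. You stop at $N^{-\theta}$, which is the same level at which you declare the fluctuation (II), and the coefficient fluctuation, to be good. In your scheme the only information kept about (II) on $\mathcal{B}^c$ is $|\mathrm{(II)}|\le N^{-\theta}$. This forcing therefore enters the quadratic variation of $e_i$ at size $N^{-2\theta}$, and the moment estimate you can close is $\E|e_i(t\wedge\tau)|^{2m}\lesssim N^{-2m\theta}$.

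Markov's inequality at level $N^{-\theta}$ with the union bound then gives $\mathbb{P}(\tau<T)\lesssim N\cdot N^{2m\theta}N^{-2m\theta}=O(N)$, which does not decay. Moreover, since $N^{-\theta}<N^{-\alpha}$, the term $\mathbb{P}(\max_i|e_i(t\wedge\tau)|>N^{-\alpha})$ in your final splitting vanishes identically. Everything therefore rests on this non-decaying $\mathbb{P}(\tau<T)$: you are trying to show that the error stays below a threshold equal to the size of its own forcing.

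The missing idea is a strict separation between the stopping level and the level at which the forcing is controlled. The paper stops at $N^{-\alpha}$ with $\alpha<\theta$ (the time $\tau_\alpha$), keeps the fluctuation level at $N^{-\theta}$, and applies Markov with the $k$-th moment of $Q_\alpha$ at level $N^{-\alpha}$. The good-set contribution is then $N^{1+\alpha k}N^{-k\theta}$. That is where $k(\theta-\alpha)-1$ comes from, not from $\mathbb{P}(\mathcal{B})$ as you suggest. The larger stopping level also changes the role of the Taylor remainder: it is not a source term but is absorbed into the Gr\"onwall coefficient via $\varepsilon^{-3k}|e_i|^{2k}\le\varepsilon^{-3k}N^{-\alpha k}|e_i|^k$. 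This is exactly where $\gamma\le\alpha/3$ is needed.

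Second, the exponents are not derived. Your displayed moment bound $N^{-m(1-2\theta)+\alpha k+2+\gamma(2k+4m)}$ is already the final probability exponent. It already contains the Markov factor $N^{\alpha k}$ and the union factor $N$, with $k$ the moment order of the error and $m$ that of the law of large numbers, which is the reverse of your usage. Multiplying it again by $N\cdot N^{2m\alpha}$ cannot produce $N^{-\eta}$. In the paper the second entry of the bound on $\eta$ comes from the bad set: $\|\Phi^\varepsilon\|_{L^\infty}^k\,\mathbb{P}(\mathcal{B}^N_{\theta,\Phi^\varepsilon})\le C\varepsilon^{-2k-4m}N^{2m(\theta-1/2)+1}$, multiplied by $N^{1+\alpha k}$.

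Finally, $\mathcal{B}$ depends on $s$. Inserting a non-adapted indicator $\1_{\mathcal{B}^c}$ into $\E|e_i(t\wedge\tau)|^{2m}$ destroys the martingale cancellation in It\^o's formula. Your fallback is the right one and is what the paper does: apply BDG first, then split $\1_{\mathcal{B}(s)}+\1_{\mathcal{B}(s)^c}$ pointwise in $s$ inside $\E\int_0^{t\wedge\tau_\alpha}\cdots\,ds$.
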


\begin{remark}
To see the feasible range of values for $\gamma$ and $\eta$ more clearly, we provide a special case. Let us choose $\theta=0.35$ and $\alpha=0.1$. 

First, we select an integer $k > \frac{1}{0.35 - 0.1} = 4$. We choose $k=5$. 

Next, we select an integer $m > \frac{0.1(5) + 2}{1 - 2(0.35)} = \frac{2.5}{0.3} \approx 8.33$. We choose $m=10$. 

Substituting these choices into the upper bound for $\gamma$ yields:
$$0<\gamma<\min\Big\{\frac{0.1}{3}, \frac{-0.5 + 10(0.3) - 2}{10 + 40}\Big\} = \min\Big\{0.033, \frac{0.5}{50}\Big\} = 0.01.$$
We may thus assume $\gamma=0.005$. Finally, we evaluate the upper bound for $\eta$:
$$0<\eta\le\min\big\{5(0.25)-1, -0.005(50) - 0.5 + 3 - 2\big\} = \min\{0.25, 0.25\} = 0.25.$$
Therefore, for this configuration, we can safely choose a convergence rate of $\eta=0.2$ while maintaining an algebraic cut-off scaling of $\varepsilon \sim N^{-0.005}$.
\end{remark}

The  proof of Theorem \ref{propagation of chaos}, given in Section 2, follows the approach of \cite{lazarovici2017mean}, adopting a superior algebraic scaling with respect to the maximum norm of the trajectory, which \cite{BOL2026113712} only achieved results under logarithmic scaling. 
Since the diffusion coefficients in the stochastic models \eqref{sde} and \eqref{mfs} given the nonlinear dependence of the diffusion coefficients on inter-particle interactions, the use of the Burkholder-Davis-Gundy inequality becomes indispensable. The key ingredient of our proof is the introduction of stopping times, a technique that facilitates the application of the Law of Large Numbers. 

Based on the uniform $L^\infty(0,T^*;H^2(\R^2))$ bound for $\nabla\log u_\varepsilon$ in Lemma \ref{weak solution} and the propagation of chaos established in Theorem \ref{propagation of chaos}, our next main result addresses the strong $L^1$ convergence for the propagation of chaos. The proof is mainly presented in Section 3, employing the relative entropy method \cite{jabin2018quantitative} as recently refined in \cite{chen2025quantitative}.

\begin{theorem}\label{Propagation of chaos in the strong sense}\textup{(Propagation of chaos in the strong sense).} 
Under the assumptions of Theorem \ref{propagation of chaos}, let $r\in \mathbb{N^+}$, $u_{N,r}^\varepsilon(t, x_1,\cdots, x_r)$ be the $r$-th marginal density of the joint density $u_N^\varepsilon(t, x_1,\cdots, x_N)$ of $\{X_{N,i}^\varepsilon\}_{1\le i\le N}$, and $u^{\varepsilon \otimes r}(t, x_1,\cdots, x_r)$ be the tensor product of the solutions $u^{\varepsilon}$ to the model \eqref{rpde}. Then for parameters $0<\theta<1/2$, $0 <\alpha<\frac{\theta}{2}$, and $k,m\in \mathbb{N}^+$ satisfying $k > \frac{1}{\theta-\alpha}$ and $m \ge \frac{\alpha k +2}{1-2\theta}$, there exist a time $T^*\in(0,T)$, a constant $C(m,r,T)>0$, and a parameter $\beta>0$ satisfying the bound $1<\beta\le\min\{\frac{2\alpha}{\gamma}-6,\frac{\eta}{\gamma}-4\}$ such that 
\begin{align}\label{Ps111}
\|u_{N,r}^{\varepsilon}(t) - u^{\varepsilon\otimes r}(t)\|_{L^\infty(0,T^*;L^1(\R^{2r}))}^2 \le C(r,m,T)\varepsilon^{\beta}, 
  \end{align}
where the cut-off parameter satisfies
$\varepsilon\sim N^{-\gamma} $,
and the parameter $\gamma$ defined by
$$0<\gamma<\min\{\frac{2\alpha}{7},\frac{-\alpha k + m(1-2\theta)-2}{2k+4m}\},$$
and the parameter $\eta$ satisfies
$$5\gamma<\eta<\min\big\{k(\theta-\alpha)-1, -\gamma(2k+4m)-\alpha k +m(1-2\theta)-2\big\}.$$ 
\end{theorem}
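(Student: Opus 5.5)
We will follow the relative entropy strategy of \cite{jabin2018quantitative} in the refined form of \cite{chen2025quantitative}, comparing the joint law $u_N^\varepsilon$ of \eqref{sde} with the tensorized law $u^{\varepsilon\otimes N}$ of the mean-field system \eqref{mfs}. The scaled relative entropy is
\[
H_N(t):=\frac1N\int_{\R^{2N}} u_N^\varepsilon\log\frac{u_N^\varepsilon}{u^{\varepsilon\otimes N}}\,dx .
\]
Both densities solve linear Fokker--Planck equations of pure diffusion type. The difference lies only in the diffusion coefficients $a_i(x)=2e^{-\frac1N\sum_j\Phi^\varepsilon(x_i-x_j)}+2$ and $\bar a_i(x)=2e^{-\Phi^\varepsilon*u^\varepsilon(x_i)}+2$. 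Differentiating $H_N$ in time and integrating by parts should give
\[
\frac{d}{dt}H_N\le -\frac{c}{N}\sum_i\int u_N^\varepsilon\Big|\nabla_i\log\frac{u_N^\varepsilon}{u^{\varepsilon\otimes N}}\Big|^2
+\frac{C}{N}\sum_i \E\Big[|a_i-\bar a_i|^2\big(|\nabla\log u^\varepsilon|^2+|\nabla^2\log u^\varepsilon|+|\nabla \bar a_i|^2\big)(X_{N,i}^\varepsilon)\Big],
\]
up to Young's inequality. Here the lower bound $a_i\ge2$ supplies a uniform dissipation that absorbs the cross terms.

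\textbf{Step 1: control the weights.} For $t\le T^*$, we use \eqref{uniformweaksolution1} with $q>2$ and the Sobolev embedding $W^{1,q}\subset L^\infty$ to bound $\nabla\log u^\varepsilon$ in $L^\infty$ uniformly in $\varepsilon$. This is exactly why the statement is restricted to $(0,T^*)$. The term $\nabla\bar a_i$ involves $\nabla\Phi^\varepsilon*u^\varepsilon$, which is bounded by Young's inequality via \eqref{D1phi} and the $L^\infty$ bound on $u^\varepsilon$ in \eqref{uniformweaksolution}. The second-order term is handled similarly, or is kept inside the dissipation by a further integration by parts. The remaining quantity to estimate is $\frac1N\sum_i\E|a_i-\bar a_i|^2(X^\varepsilon_{N}(t))$.

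\textbf{Step 2: split on a good event.} We split $a_i(X_N)-\bar a_i(X_N)$ as $[a_i(X_N)-a_i(\bar X)]+[a_i(\bar X)-\bar a_i(\bar X)]+[\bar a_i(\bar X)-\bar a_i(X_N)]$.
\begin{itemize}
\item On the event $\{\max_i|X_{N,i}^\varepsilon-\overline X_i^\varepsilon|\le N^{-\alpha}\}$, the first and third differences are bounded by $\|\nabla\Phi^\varepsilon\|_\infty N^{-\alpha}\le C\varepsilon^{-2}N^{-\alpha}$ by \eqref{phi}. Their square is $C\varepsilon^{-4}N^{-2\alpha}$.
\item On the complement, all coefficients are bounded, since $\exp(-\cdot)\le1$ because $\Phi\ge0$. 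Its probability is at most $CN^{-\eta}$ by Theorem \ref{propagation of chaos}.
\item The middle term is a law-of-large-numbers fluctuation for the i.i.d.\ $\overline X_j$. Its second moment is $\le C\|\Phi^\varepsilon\|_\infty^2/N\le C\varepsilon^{-4}N^{-1}$.
\end{itemize}
Using $\varepsilon\sim N^{-\gamma}$, the right side becomes $C(\varepsilon^{2\alpha/\gamma-4}+\varepsilon^{\eta/\gamma})$, possibly with extra negative powers of $\varepsilon$ from the weights. Tracking those powers carefully is what should produce the exponents $\frac{2\alpha}{\gamma}-6$ and $\frac{\eta}{\gamma}-4$. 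The constraints $\gamma<2\alpha/7$ and $\eta>5\gamma$ make both exponents exceed $1$.

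\textbf{Step 3: conclude.} Since the initial data are both i.i.d.\ with law $u_0*j^\varepsilon$, we have $H_N(0)=0$. Gr\"onwall on $[0,T^*]$ gives $H_N(t)\le C\varepsilon^\beta$. Subadditivity of relative entropy for marginals gives $H(u^\varepsilon_{N,r}|u^{\varepsilon\otimes r})\le \frac{r}{N}H(u_N^\varepsilon|u^{\varepsilon\otimes N})\cdot\frac{N}{\lfloor N/r\rfloor}\le C r H_N$. Csisz\'ar--Kullback--Pinsker then yields \eqref{Ps111}.

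\textbf{Main obstacle.} The hardest point is justifying the entropy dissipation computation when the diffusion is nonlinear rather than a drift. The mismatch of coefficients produces terms with $\nabla^2 u^{\varepsilon}$ and $\nabla a_i$, and $\nabla a_i$ carries $\varepsilon^{-2}$. We will first try to write the generator difference in divergence form, $\sum_i\nabla_i\cdot\big((a_i-\bar a_i)\nabla_i u\big)+\Delta_i$-type terms. Then all derivatives fall on $\log u^{\varepsilon\otimes N}$, which is controlled by \eqref{uniformweaksolution1}, or on the ratio, which is absorbed by the dissipation. This leaves only $|a_i-\bar a_i|^2$ and $|\nabla(a_i-\bar a_i)|^2$ weights. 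The gradient difference is then bounded with \eqref{D2phi} on the good event, and this is where the extra $\varepsilon$ losses enter.
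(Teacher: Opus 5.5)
Your overall route is the paper's. You differentiate $\mathcal{H}(u_N^\varepsilon|u^{\varepsilon\otimes N})$ in time and use the lower bound $1+e^{-S_i}\ge 1$ to absorb cross terms by Young's inequality. The error is then handled by coupling with $\overline{X}_i^\varepsilon$ on the event of Theorem~\ref{propagation of chaos} plus the law of large numbers, and you finish with $\mathcal{H}(0)=0$, marginal subadditivity and Csisz\'ar--Kullback--Pinsker.

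The step that is wrong as written is the differential inequality you display, on which Steps~1--2 rest. Set $S_i(x)=\frac1N\sum_{j}\Phi^\varepsilon(x_i-x_j)$, $\bar S_i(x)=\Phi^\varepsilon*u^\varepsilon(x_i)$ and $h=u_N^\varepsilon/u^{\varepsilon\otimes N}$. One integration by parts gives the identity
\begin{align*}
\frac{d}{dt}\mathcal{H}(u_N^\varepsilon|u^{\varepsilon\otimes N})
={}&-\frac1N\sum_{i=1}^N\int_{\R^{2N}} u_N^\varepsilon\,\big(1+e^{-S_i}\big)\,|\nabla_{x_i}\log h|^2\\
&-\frac1N\sum_{i=1}^N\int_{\R^{2N}} u_N^\varepsilon\Big[e^{-S_i}\big(\nabla_{x_i}\bar S_i-\nabla_{x_i}S_i\big)+\big(e^{-S_i}-e^{-\bar S_i}\big)\big(\nabla\log u^\varepsilon(x_i)-\nabla_{x_i}\bar S_i\big)\Big]\cdot\nabla_{x_i}\log h .
\end{align*}
This is exactly the paper's splitting into $J_1$ and $J_2$, and it is also what your divergence-form idea in the last paragraph produces.

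No $\nabla^2\log u^\varepsilon$ appears. That is fortunate, since \eqref{uniformweaksolution1} only places it in $L^q$, so it could not serve as an $L^\infty$ weight.

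The first bracket, however, is the mismatch $\frac1N\sum_j\nabla\Phi^\varepsilon(x_i-x_j)-\nabla\Phi^\varepsilon*u^\varepsilon(x_i)$. It is not of the form $|a_i-\bar a_i|\times(\text{weight})$. So it is absent from your inequality and from Step~2, and it is the dominant term.

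The term you do estimate in Step~2 is the paper's $J_2$. Your treatment of it, through $\|\Phi^\varepsilon\|_\infty$ and the Lipschitz constant of $\Phi^\varepsilon$, is actually more accurate than the paper's ``similar to $J_1$''. But it only yields $C(\varepsilon^{-4}N^{-2\alpha}+\varepsilon^{-4}N^{-1}+N^{-\eta})$. The weights $\nabla\log u^\varepsilon$ and $\nabla\Phi^\varepsilon*u^\varepsilon$ are bounded uniformly in $\varepsilon$ on $(0,T^*)$. So the exponents $\frac{2\alpha}{\gamma}-6$ and $\frac{\eta}{\gamma}-4$ cannot come from ``extra powers in the weights''.

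Your last paragraph does name the right object, $|\nabla_{x_i}(a_i-\bar a_i)|^2$. What is needed is to estimate the gradient mismatch exactly as in Step~2, but one derivative higher. After inserting $\overline{X}^\varepsilon$:
\begin{itemize}
\item the law of large numbers with $\psi^\varepsilon=\nabla\Phi^\varepsilon$ gives $C\|\nabla\Phi^\varepsilon\|_\infty^2N^{-1}\le C\varepsilon^{-4}N^{-1}$;
\item on $\{\max_i|X_{N,i}^\varepsilon-\overline{X}_i^\varepsilon|\le N^{-\alpha}\}$ the coupling error is at most $C\|D^2\Phi^\varepsilon\|_\infty^2N^{-2\alpha}\le C\varepsilon^{-6}N^{-2\alpha}$, by \eqref{D2phi};
\item on the complement it is at most $4\|\nabla\Phi^\varepsilon\|_\infty^2\,CN^{-\eta}\le C\varepsilon^{-4}N^{-\eta}$, by \eqref{phi} and Theorem~\ref{propagation of chaos}.
\end{itemize}
These are the paper's $J_{11}$--$J_{13}$, and together they produce $\beta\le\min\{\frac{2\alpha}{\gamma}-6,\frac{\eta}{\gamma}-4\}$. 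The resulting right-hand side no longer involves $\mathcal{H}$, so you integrate directly in time; no Gr\"onwall argument is needed.
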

\begin{remark}\label{remark2}
To illustrate the feasible range of parameters, we provide an explicit example. 
Set $\theta = 0.4$ and $\alpha = 0.1$. First, we must choose an integer $k > \frac{1}{\theta-\alpha} = \frac{1}{0.3} \approx 3.33$, so we take $k=4$.
Next, we choose an integer $m > \frac{\alpha k+2}{1-2\theta} = \frac{0.4+2}{0.2} = 12$, so we take $m=13$. 
Then the bound for $\gamma$ is
\begin{align*}
0<\gamma<&\min\left\{\frac{2\alpha}{7},\frac{-\alpha k + m(1-2\theta)-2}{2k+4m}\right\}\\
=&\min\left\{\frac{0.2}{7}, \frac{-0.4 + 13(0.2)-2}{8+52}\right\} \approx \min\{0.0285, 0.00333\}=0.00333.
\end{align*}
Taking $\gamma=0.001$ yields
\[
0.005=5\gamma<\eta<\min\big\{k(\theta-\alpha)-1, -\gamma(2k+4m)-\alpha k +m(1-2\theta)-2\big\}
=\min\{0.2, 0.14\}=0.14.
\]
We thus choose $\eta=0.05$. Finally, we evaluate the convergence rate $\beta$:
\[
1<\beta\le\min\left\{\frac{2\alpha}{\gamma}-6,\frac{\eta}{\gamma}-4\right\}
=\min\{194, 46\}=46.
\]
Thus, we achieve an explicit algebraic convergence rate of $\mathcal{O}(\varepsilon^{46})$.
\end{remark}
The prior work by Chen in \cite[Theorem 5]{BOL2026113712} also demonstrated the propagation of chaos in the strong $L^1$ sense, resulting in the subsequent finding 
\begin{align*}
\|u_{N,r}^{\varepsilon}(t) - u^{\varepsilon\otimes r}(t)\|_{L^\infty(0,T^*;L^1(\R^{2r}))}^2  \le C(r,m,T)\varepsilon,\quad \varepsilon\sim (\lambda\log N)^{-\frac{1}{4}}. 
\end{align*}
However, our approach takes advantage of the Law of Large Numbers embodied in Lemma \ref{law of large number} and the convergence of probability in Theorem \ref{propagation of chaos}, which subsequently leads to a more refined estimate compared to \cite{BOL2026113712}. For the convenience of comparison, we substitute the special case of the parameters obtained in Remark \ref{remark2} into \eqref{Ps111}
\begin{align*}
\|u_{N,r}^{\varepsilon}(t) - u^{\varepsilon\otimes r}(t)\|_{L^\infty(0,T^*;L^1(\R^{2r}))}^2 \le C(r,m,T)\varepsilon^{46},\quad \varepsilon\sim N^{-0.001}. 
  \end{align*}
Obviously, we have obtained a  faster convergence rate.



The article is organized as follows. In section 2, we establish the propagation of chaos, which corresponds to the convergence in probability of solutions to the stochastic differential equations \eqref{sde} and \eqref{mfs}. Section 3 further derives quantitative propagation of chaos result in the strong sense by applying the relative entropy method.

\section{Convergence in probability}

In this section, we prove Theorem \ref{propagation of chaos}, which establishes the convergence in probability for the interacting particle system of \eqref{pde} and its mean-field dynamics, based on the estimates of weak solutions in Lemma \ref{weak solution}. In addition, to prove this theorem, we provide the following results.

First, based on the existence of continuous and square-integrable stochastic processes given by \cite[Theorem 2.9, page 289]{sznitman2006topics} and the strong uniqueness from \cite[Theorem 2.5, page 287]{sznitman2006topics}, we demonstrate that the SDEs \eqref{sde} and \eqref{mfs} admit unique strong solutions $\{X_{N,i}^\varepsilon(t)\}_{1\le i\le N}$ and $\{\overline{X}_i^\varepsilon(t)\}_{1\le i\le N}$, whose distributions have a density function $u^\varepsilon(t)$ that constitutes a weak solution of the PDE \eqref{rpde}. Furthermore, applying \cite[Theorem 2.3.1]{nualart2006malliavin}, we conclude that the law of $\overline{X}_i^\varepsilon(t)$ is absolutely continuous with respect to the Lebesgue measure, thus establishing the existence of the density function $u^\varepsilon(t)$. 

We then invoke the following important lemma from \cite{chen2024fluctuations}, which describes the Law of Large Numbers. In \cite{chen2024fluctuations}, the authors provide two estimations in expectation and probability for i.i.d. processes $\overline{X}_i^\varepsilon(t)$ and their associated density $u^\varepsilon (t)$. We present this result in the following lemma.
\begin{lemma}\label{law of large number}
\textup{(Law of Large Numbers, \cite[Lemma 7]{chen2024fluctuations}).}
Let $\{\overline{X}_i^\varepsilon\}_{1\le i\le N}$ be i.i.d. random variables with the common density function $u^\varepsilon$. Given $0<\theta<\frac{1}{2}$ and $\psi^\varepsilon\in L^\infty(\R^2;\R)$, we define the random variables 
\begin{align*}
h_{ij}:= \psi^\varepsilon\big(\overline{X}_i^\varepsilon -\overline{X}_j^\varepsilon\big)-\psi^\varepsilon*u^\varepsilon\big(\overline{X}_i^\varepsilon\big),~~~~i,j=1,\cdots,N,
\end{align*}
and the set 
\begin{align}\label{B}
\mathcal{B}_{\theta, \psi^\varepsilon}^N:= \bigcup_{i=1}^N\{\omega\in\Omega:|\frac{1}{N}\sum_{j=1}^N h_{ij}| > N^{-\theta}\}. \end{align}
Then, for every $m\in\mathbb{N}$, there exists $C(m)>0$ such that
\begin{align}
&\E(|\frac{1}{N}\sum_{j=1}^Nh_i|^{2m})\le C(m)\|\psi^\varepsilon\|_{L^\infty(\R^2)}^{2m}N^{-m},\label{Elnl}\\
&\mathbb{P}(\mathcal{B}_{\theta,\psi^\varepsilon}^N)\le C(m)\|\psi^\varepsilon\|_{L^\infty(\R^2)}^{2m}N^{2m(\theta-1/2)+1}.\label{plnl}
\end{align}
\end{lemma}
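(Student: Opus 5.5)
We prove the two estimates in turn: first the moment bound \eqref{Elnl} for a fixed index $i$, by conditioning on $\overline{X}_i^\varepsilon$ and exploiting cancellations among conditionally independent centered terms; then \eqref{plnl} follows from Markov's inequality and a union bound over $i$. Throughout, write $\|\psi\|:=\|\psi^\varepsilon\|_{L^\infty(\R^2)}$, so that $|h_{ij}|\le 2\|\psi\|$ almost surely for all $i,j$. (We read the $h_i$ in \eqref{Elnl} as $h_{ij}$ with $i$ fixed and $j$ summed.)

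Fix $i$ and split $\frac1N\sum_{j=1}^N h_{ij}=\frac1N h_{ii}+\frac1N\sum_{j\ne i}h_{ij}$. The diagonal term is bounded by $2\|\psi\|/N$. By the elementary inequality $|a+b|^{2m}\le 2^{2m-1}(|a|^{2m}+|b|^{2m})$, its $2m$-th power contributes at most $C(m)\|\psi\|^{2m}N^{-2m}\le C(m)\|\psi\|^{2m}N^{-m}$, so it suffices to treat $S_i:=\frac1N\sum_{j\ne i}h_{ij}$.

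Conditionally on $\overline{X}_i^\varepsilon=x$, the variables $h_{ij}=\psi^\varepsilon(x-\overline{X}_j^\varepsilon)-\psi^\varepsilon*u^\varepsilon(x)$, $j\ne i$, are i.i.d. by independence of the $\overline{X}_j^\varepsilon$. They have conditional mean zero, because $\E[\psi^\varepsilon(x-\overline{X}_j^\varepsilon)]=\int\psi^\varepsilon(x-y)u^\varepsilon(y)\,dy=\psi^\varepsilon*u^\varepsilon(x)$. We then expand
\begin{equation*}
\E\big[S_i^{2m}\,\big|\,\overline{X}_i^\varepsilon\big]=N^{-2m}\sum_{j_1,\dots,j_{2m}\ne i}\E\big[h_{ij_1}\cdots h_{ij_{2m}}\,\big|\,\overline{X}_i^\varepsilon\big].
\end{equation*}
Any multi-index in which some value $j_\ell$ occurs exactly once gives zero, by conditional independence and the zero mean. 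In the surviving multi-indices every value occurs at least twice, so at most $m$ distinct values appear. Their number is bounded by $C(m)N^m$, a purely combinatorial count of set partitions of $\{1,\dots,2m\}$ into blocks of size at least $2$ times the choices of labels. Each surviving term is bounded by $(2\|\psi\|)^{2m}$. Hence $\E[S_i^{2m}]\le C(m)\|\psi\|^{2m}N^{-m}$, and \eqref{Elnl} follows after taking the full expectation. This combinatorial moment count is the only real step, and it is standard (a Marcinkiewicz--Zygmund/Rosenthal-type bound). The one point to be careful about is that independence is only conditional on $\overline{X}_i^\varepsilon$, which is why the expectation is first taken conditionally.

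For \eqref{plnl}, Markov's inequality applied to the $2m$-th moment gives, for each $i$,
\begin{equation*}
\mathbb{P}\Big(\Big|\frac1N\sum_{j=1}^N h_{ij}\Big|>N^{-\theta}\Big)\le N^{2m\theta}\,C(m)\|\psi\|^{2m}N^{-m}.
\end{equation*}
A union bound over the $N$ indices $i$ in \eqref{B} multiplies this by $N$, giving $C(m)\|\psi\|^{2m}N^{2m(\theta-1/2)+1}$, which is exactly \eqref{plnl}.
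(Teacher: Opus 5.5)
Your argument is correct and complete. It conditions on $\overline{X}_i^\varepsilon$, discards every multi-index containing a singleton by conditional independence and zero conditional mean, and counts the $C(m)N^m$ surviving terms, which gives \eqref{Elnl}; the diagonal term $h_{ii}$ is harmless, and Markov's inequality with a union bound over $i$ then yields \eqref{plnl}. The paper gives no proof of its own and imports the lemma from \cite[Lemma 7]{chen2024fluctuations}, and your proof is essentially the standard combinatorial moment argument behind that cited result.
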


Now, we give the proof of Theorem \ref{propagation of chaos} based on the Law of Large Numbers in Lemma \ref{law of large number}. To begin, we define a stopping time to facilitate the application of the Law of Large Numbers. 
Then, it inquires about the application of the Burkholder-Davis-Gundy inequality, since the diffusion coefficients in the stochastic models \eqref{sde} and \eqref{mfs} depend nonlinearly on the interactions between the individuals. Finally, following a similar strategy given in \cite{chen2024fluctuations},  we estimate the probability error of the difference $X_{N,i}^\varepsilon - \overline{X}_i^\varepsilon$. The Markov inequality proves to be essential throughout this process.

\begin{proof}[Proof of Theorem \ref{propagation of chaos}]
We begin by defining the key concepts needed in our proof. Let $0<\theta<\frac{1}{2}$ and take $0<\alpha<\frac{\theta}{2}$. First, we define the stopping time
\begin{align}\label{stopingtime}
&\tau_\alpha(\omega):=\inf\big\{t\in(0,T):\max_{i=1,\cdots,N}|(X_{N,i}^\varepsilon - \overline{X}_i^\varepsilon)(\omega,t)|\ge N^{-\alpha}\big\}.
\end{align}
By using the Markov inequality, it follows that for any $k>0$
\begin{align}\label{MK}
\mathbb{P}\big(\max_{i=1,\cdots,N}\big|(X_{N,i}^\varepsilon - \overline{X}_i^\varepsilon)(t)\big|> N^{-\alpha}\big)
\le\ & \mathbb{P}\big(\max_{i=1,\cdots,N}\big|(X_{N,i}^\varepsilon - \overline{X}_i^\varepsilon)(t\land\tau_\alpha)\big|\ge N^{-\alpha}\big)\nonumber\\
\le\ & \sum_{i=1}^N\mathbb{P}\big(\big|(X_{N,i}^\varepsilon - \overline{X}_i^\varepsilon)(t\land\tau_\alpha)\big|\ge N^{-\alpha}\big)\nonumber\\
\le\ &
\sum_{i=1}^N N^{\alpha k}\mathbb{E}\big[\big|(X_{N,i}^\varepsilon - \overline{X}_i^\varepsilon)(t\land\tau_\alpha)\big|^k\big]\nonumber\\
\le\ &
N^{1+\alpha k}\mathbb{E}\big[\frac{1}{N}\sum_{i=1}^N\big|(X_{N,i}^\varepsilon - \overline{X}_i^\varepsilon)(t\land\tau_\alpha)\big|^k\big]\notag \\
=:&N^{1+\alpha k} \mathbb{E}\big(Q_\alpha^k(t)\big).
\end{align}

In the following, we focus on analyzing the expectation of $Q_\alpha^k(t)$. 
Applying the Burkholder-Davis-Gundy inequality \cite[Theorem 3.28]{karatzas1998brownian} and the mean value theorem, we have the following for $i=1,\cdots, N$,
\begin{align}
\E\big(Q_\alpha^k(t)\big)
=\ &  \E\Big(\frac{1}{N}\sum_{i=1}^N\Big|\int_0^{t\land\tau_\alpha}\Big(2\exp\big(-\frac{1}{N}\sum_{j=1}^N\Phi^\varepsilon(X_{N,i}^\varepsilon(s) -X_{N,j}^\varepsilon(s))\big)+2\Big)^\frac{1}{2}\nonumber\\
&-\Big(2\exp\big(-\Phi^\varepsilon*u^\varepsilon(\overline{X}_i^\varepsilon(s),s)\big) +2\Big)^\frac{1}{2}dB_i(s)\Big|^k \Big)\nonumber\\
\le\ & C(T) \E\Big(\int_0^{t\land\tau_\alpha}\frac{1}{N}\sum_{i=1}^N\Big|\Big(2\exp\big(-\frac{1}{N}\sum_{j=1}^N\Phi^\varepsilon(X_{N,i}^\varepsilon(s) -X_{N,j}^\varepsilon(s))\big)+2\Big)^\frac{1}{2}\nonumber\\
&-\Big(2\exp\big(-\Phi^\varepsilon*u^\varepsilon(\overline{X}_i^\varepsilon(s),s)\big) +2\Big)^\frac{1}{2}\Big|^k ds\Big)\nonumber\\
\le\ & C(T)\E\Big(\int_0^{t\land\tau_\alpha}\frac{1}{N}\sum_{i=1}^N\Big|\frac{1}{N}\sum_{j=1}^N\Phi^\varepsilon\big(X_{N,i}^\varepsilon(s) -X_{N,j}^\varepsilon(s)\big)-\Phi^\varepsilon*u^\varepsilon\big(\overline{X}_i^\varepsilon(s),s\big)\Big|^k ds\Big)\nonumber\\
\le \ & C(T)\E\Big(\int_0^{t\land\tau_\alpha}\frac{1}{N}\sum_{i=1}^N \big(|I_{1,i}(s)|^k + |I_{2,i}(s)|^k\big)ds \Big).\label{ES}
\end{align}
Here, the two components are given by
\begin{align}\label{first term}
&I_{1,i}(s)=\frac{1}{N}\sum_{j=1}^N\Big(\Phi^\varepsilon(X_{N,i}^\varepsilon(s)-X_{N,j}^\varepsilon(s)) - \Phi^\varepsilon(\overline{X}_i^\varepsilon(s) -\overline{X}_j^\varepsilon(s)) \Big)
\end{align}
and
\begin{align*}
&I_{2,i}(s)= \frac{1}{N}\sum_{j=1}^N\Phi^\varepsilon(\overline{X}_i^\varepsilon(s) -\overline{X}_j^\varepsilon(s))  -\Phi^\varepsilon*u^\varepsilon(\overline{X}_i^\varepsilon(s), s).
\end{align*}

The term $I_{2,i}(s)$ is estimate by Law of Large Numbers argument. We apply Lemma \ref{law of large number} with $\psi^\varepsilon = \Phi^\varepsilon$ and $0<\theta<\frac{1}{2}$, take $I_{2,i}=\frac{1}{N}\sum_{j=1}^N h_{ij}$. With the notation of Lemma \ref{law of large number} we have \begin{align}\label{BB}
\mathcal{B}_{\theta, \Phi^\varepsilon}^N(s):= \bigcup_{i=1}^N\{\omega\in\Omega:|I_{2,i}(s)| > N^{-\theta}\}.
\end{align}
Regarding the second term $I_{2,i}(s)$, we split it as follows
\begin{align}\label{EI2i}
&\E\Big(\int_0^{t\land\tau_\alpha}\frac{1}{N}\sum_{i=1}^N|I_{2,i}(s)|^k ds\Big)\nonumber\\
\le\ & \E\Big(\int_0^{t\land\tau_\alpha}\frac{1}{N}\sum_{i=1}^N|I_{2,i}(s)|^k \1_{(\mathcal{B}_{\theta,\Phi^\varepsilon}^N)^c}ds\Big) 
+ \E\Big(\int_0^{t\land\tau_\alpha}\frac{1}{N}\sum_{i=1}^N|I_{2,i}(s)|^k \1_{\mathcal{B}_{\theta,\Phi^\varepsilon}^N}ds\Big)\nonumber\\
=:\ & I_{21}(t) + I_{22}(t).
\end{align}

For $I_{21}(t)$, we apply Lemma by the definition of $\mathcal{B}_{\theta,\Phi^\varepsilon}^N$ in \eqref{BB}, we know that 
\begin{align*}
|I_{2,i}|\le N^{-\theta}~~ \text{in}~ (\mathcal{B}_{\theta,\Phi^\varepsilon}^N)^c.
\end{align*}
Hence, we obtain for any $0<t<T$
\begin{align}\label{I21}
I_{21}(t)\le \int_0^{t\land\tau_\alpha} N^{-k\theta } ds\le TN^{-k\theta }.
\end{align}

For $I_{22}(t)$, by using the Law of Large Numbers \eqref{plnl} and the properties of $\Phi^\varepsilon$ in \eqref{phi} and H\"{o}lder's inequality, we have
\begin{align}\label{I22}
I_{22}(t)\le\ & C\|\Phi^\varepsilon\|_{L^\infty(\R^2)}^k\E\Big(\int_0^{t\land\tau_\alpha}\1_{\mathcal{B}_{\theta,\Phi^\varepsilon}^N}ds\Big)\nonumber\\
\le\ & C(T) \|\Phi^\varepsilon\|_{L^\infty(\R^2)}^k\sup_{0<s<T}\mathbb{P}(\mathcal{B}_{\theta,\Phi^\varepsilon}^N)
\le\  C(m,T)\|\Phi^\varepsilon\|_{L^\infty(\R^2)}^{k+2m} N^{2m(\theta-1/2)+1}\nonumber\\
\le\ & C(m,T)\varepsilon^{-2k-4m}N^{2m(\theta-1/2)+1}. 
\end{align}
Substituting \eqref{I21} and \eqref{I22} into \eqref{EI2i}, we have
\begin{align}\label{I2i}
&\E\Big(\int_0^{t\land\tau_\alpha}\frac{1}{N}\sum_{i=1}^N|I_{2,i}(s)|^k ds\Big)
\le\  C(m,T)(N^{-k\theta} + \varepsilon^{-2k-4m}N^{2m(\theta-1/2)+1}).
\end{align}

For the first part in \eqref{ES}, through the Taylor expansion, we derive 
\begin{align}\label{EI1i}
&\E\Big(\int_0^{t\land\tau_\alpha}\frac{1}{N}\sum_{i=1}^N|I_{1,i}(s)|^k ds\Big)\nonumber\\
=\ & \E\Big(\int_0^{t\land\tau_\alpha}\frac{1}{N}\sum_{i=1}^N\Big|\frac{1}{N}\sum_{j=1}^N\big(\Phi^\varepsilon(X_{N,i}^\varepsilon-X_{N,j}^\varepsilon)(s) - \Phi^\varepsilon(\overline{X}_i^\varepsilon -\overline{X}_j^\varepsilon)(s) \big)\Big|^kds\Big)\nonumber\\
\le\ & C \E\Big(\int_0^{t\land\tau_\alpha}\frac{1}{N}\sum_{i=1}^N\Big|\frac{1}{N}\sum_{j=1}^N\nabla\Phi^\varepsilon(\overline{X}_i^\varepsilon -\overline{X}_j^\varepsilon)(s) \times\big((X_{N,i}^\varepsilon- \overline{X}_i^\varepsilon)(s) - (X_{N,j}^\varepsilon-\overline{X}_j^\varepsilon)(s) \big)\Big|^kds\Big)\nonumber\\
&+ C\|D^2\Phi^\varepsilon\|_{L^\infty(\R^2)}^k \E\Big(\int_0^{t\land\tau_\alpha}\frac{1}{N}\sum_{i=1}^N\frac{1}{N}\sum_{j=1}^N\Big|(X_{N,i}^\varepsilon- \overline{X}_i^\varepsilon)(s) - (X_{N,j}^\varepsilon-\overline{X}_j^\varepsilon)(s) \Big|^{2k}ds\Big)\nonumber\\
=:\ & C(I_{11}(t) + I_{12}(t)).
\end{align}
By the property of $D^2\Phi^\varepsilon$ in \eqref{D2phi}, $I_{12}(t)$ can be estimated as
\begin{align}\label{I12}
I_{12}(t)
\le\ & 
C\varepsilon^{-3k} \int_0^{t\land\tau_\alpha}\E\Big(\frac{1}{N}\sum_{i=1}^N|(X_{N,i}^\varepsilon- \overline{X}_i^\varepsilon)(s)|^{2k}\Big)ds\nonumber\\
\le\ & 
C\varepsilon^{-3k} \int_0^{t\land\tau_\alpha}\E\Big(\frac{1}{N}\sum_{i=1}^N N^{-\alpha k}|(X_{N,i}^\varepsilon- \overline{X}_i^\varepsilon)(s)|^{k}\Big)ds\nonumber\\
=\ & 
C\varepsilon^{-3k} N^{-\alpha k}\int_0^{t\land\tau_\alpha}\E(Q_\alpha^{k}(s))ds.
\end{align}
For the second inequality, according to the definition of $\tau_\alpha$ in \eqref{stopingtime}, we note that for any $s \le \tau_\alpha$, $\max_{i=1\dots N} |(X_{N,i}^\varepsilon - \overline{X}_i^\varepsilon)(s)| \le N^{-\alpha}$. Thus,
\begin{align}
|(X_{N,i}^\varepsilon- \overline{X}_i^\varepsilon)(s)|^{2k} 
&= |(X_{N,i}^\varepsilon- \overline{X}_i^\varepsilon)(s)|^k \cdot |(X_{N,i}^\varepsilon- \overline{X}_i^\varepsilon)(s)|^k \nonumber\\
&\le N^{-\alpha k} |(X_{N,i}^\varepsilon- \overline{X}_i^\varepsilon)(s)|^k.
\end{align}

For $I_{11}(t)$, it can be written as
\begin{align}\label{I11+}
I_{11}(t)\le\ & 4\E\Big(\int_0^{t\land\tau_\alpha}\frac{1}{N}\sum_{i=1}^N\Big|\frac{1}{N}\sum_{j=1}^N\nabla\Phi^\varepsilon(\overline{X}_i^\varepsilon - \overline{X}_j^\varepsilon)(X_{N,i}^\varepsilon - \overline{X}_i^\varepsilon )(s) \Big|^kds\Big) \nonumber\\
&+ 4\E\Big(\int_0^{t\land\tau_\alpha}\frac{1}{N}\sum_{i=1}^N\Big|\frac{1}{N}\sum_{j=1}^N\nabla\Phi^\varepsilon(\overline{X}_i^\varepsilon -\overline{X}_j^\varepsilon) (X_{N,j}^\varepsilon - \overline{X}_j^\varepsilon)(s) \Big|^kds\Big)\nonumber\\
=: & C(P_1(t) +P_2(t)).
\end{align}
First, we can verify $P_1(t)$
\begin{align}\label{I111+ }
P_1(t)\le\ &  C \E\Big(\int_0^{t\land\tau_\alpha}\frac{1}{N}\sum_{i=1}^N\Big(\Big|\frac{1}{N}\sum_{j=1}^N\nabla\Phi^\varepsilon(\overline{X}_i^\varepsilon - \overline{X}_j^\varepsilon)(s) \Big|^k \Big|(X_{N,i}^\varepsilon - \overline{X}_i^\varepsilon )(s) \Big|^k\Big)ds\Big)\nonumber\\
\le\ &C\E\Big(\int_0^{t\land\tau_\alpha}Q_\alpha^k(s)\frac{1}{N}\sum_{i=1}^N\Big|\frac{1}{N}\sum_{j=1}^N\nabla\Phi^\varepsilon(\overline{X}_i^\varepsilon - \overline{X}_j^\varepsilon)(s) - (\nabla\Phi^\varepsilon* u^\varepsilon)(\overline{X}_i^\varepsilon(s),s)\Big|^kds\Big) \nonumber\\ 
&+ C\E\Big(\int_0^{t\land\tau_\alpha}Q_\alpha^k(s)\frac{1}{N}\sum_{i=1}^N\Big|(\nabla\Phi^\varepsilon* u^\varepsilon)(\overline{X}_i^\varepsilon(s),s)\Big|^kds\Big) \nonumber\\ 
        =:\ &C(P_{11}(t) + P_{12}(t)).
\end{align}
We proceed to estimate each term individually. For the first term, it can be handled by employing Lemma \ref{law of large number} with $\psi^\varepsilon=\nabla\Phi^\varepsilon$ and $\theta=0$.
\begin{align}\label{I1111}
P_{11}(t) \le\ &\E\Big(\int_0^{t\land\tau_\alpha}Q_\alpha^k(s)\frac{1}{N}\sum_{i=1}^N\Big|\frac{1}{N}\sum_{j=1}^N\nabla\Phi^\varepsilon(\overline{X}_i^\varepsilon - \overline{X}_j^\varepsilon)(s) - (\nabla\Phi^\varepsilon* u^\varepsilon)(\overline{X}_i^\varepsilon
(s),s)\Big|^k \mathbf{1}_{(\mathcal{B}_{0,\nabla\Phi^\varepsilon}^N)^c}ds\Big) \nonumber\\ 
&+ \E\Big(\int_0^{t\land\tau_\alpha}Q_\alpha^k(s)\frac{1}{N}\sum_{i=1}^N\Big|\frac{1}{N}\sum_{j=1}^N\nabla\Phi^\varepsilon(\overline{X}_i^\varepsilon - \overline{X}_j^\varepsilon)(s) - (\nabla\Phi^\varepsilon* u^\varepsilon)(\overline{X}_i^\varepsilon(s),s)\Big|^k \mathbf{1}_{\mathcal{B}_{0,\nabla\Phi^\varepsilon}^N}ds\Big) \nonumber\\ 
\le\ & N^{-0}\int_0^{t\land\tau_\alpha} \E(Q_\alpha^k(s))ds + C(T)\|\nabla\Phi^\varepsilon\|_{L^\infty(\R^2)}^k\sup_{0<s<T}\mathbb{P}(\mathcal{B}_{0,\nabla\Phi^\varepsilon}^N(s))\nonumber\\
\le\ &\int_0^{t\land\tau_\alpha} \E(Q_\alpha^k(s))ds + C(m, T)\varepsilon^{-2k-4m}N^{-m+1}.
\end{align}
Then for $P_{12}(t)$, according to
\begin{align}\label{nabphi_infty}
\|\nabla\Phi^\varepsilon*u^\varepsilon\|_{L^\infty((0,T)\times\R^2)}\le \|\nabla\Phi^\varepsilon\|_{L^1(\R^2)}\|u^\varepsilon\|_{L^\infty((0,T)\times\R^2)}\le C(T),
\end{align}
we have
\begin{align}\label{I1112}
P_{12}(t)=\ & \E\Big(\int_0^{t\land\tau_\alpha}Q_\alpha^k(s)\frac{1}{N}\sum_{i=1}^N\Big|(\nabla\Phi^\varepsilon* u^\varepsilon)(\overline{X}_i^\varepsilon(s),s)\Big|^kds\Big) \nonumber\\ 
\le\ & \|\nabla\Phi^\varepsilon*u^\varepsilon\|_{L^\infty((0,T)\times\R^2)}^k\int_0^{t\land\tau_\alpha}\E(Q_\alpha^k(s))ds\nonumber\\
\le\ & C(T) \int_0^{t\land\tau_\alpha}\E(Q_\alpha^k(s))ds.
\end{align}
Substituting \eqref{I1111} and \eqref{I1112} into \eqref{I111+ }, we obtain
\begin{align}\label{I111}
P_1(t)\le C(T)\int_0^{t\land\tau_\alpha}\E(Q_\alpha^k(s))ds + C(m,T)\varepsilon^{-2k-4m}N^{-m+1}.
\end{align}
Finally, we estimate $P_2(t)$ by first taking the modulus of $X_{N,j}^\varepsilon(s) - \overline{X}_j^\varepsilon(s)$ inside the sum:
\begin{align}
P_2(t) 
\le\ &\E\Big(\int_0^{t\land\tau_\alpha}\frac{1}{N}\sum_{i=1}^N\Big|\frac{1}{N}\sum_{j=1}^N|\nabla\Phi^\varepsilon(\overline{X}_i^\varepsilon -\overline{X}_j^\varepsilon)(s)||(X_{N,j}^\varepsilon - \overline{X}_j^\varepsilon)(s)| \Big|^kds\Big)\nonumber\\
\le\ & 
\E\Big(\int_0^{t\land\tau_\alpha}Q_\alpha^k(s)\frac{1}{N}\sum_{i=1}^N\Big|\frac{1}{N}\sum_{j=1}^N|\nabla\Phi^\varepsilon(\overline{X}_i^\varepsilon - \overline{X}_j^\varepsilon)(s)|\Big|^kds\Big)\nonumber\\
\le\ &C\E\Big(\int_0^{t\land\tau_\alpha}Q_\alpha^k(s)\frac{1}{N}\sum_{i=1}^N\Big|\frac{1}{N}\sum_{j=1}^N|\nabla\Phi^\varepsilon(\overline{X}_i^\varepsilon - \overline{X}_j^\varepsilon)(s)|- (|\nabla\Phi^\varepsilon|* u^\varepsilon)(\overline{X}_i^\varepsilon(s),s)\Big|^kds\Big) \nonumber\\ 
&+ C\E\Big(\int_0^{t\land\tau_\alpha}Q_\alpha^k(s)\frac{1}{N}\sum_{i=1}^N\Big|(|\nabla\Phi^\varepsilon|* u^\varepsilon)(\overline{X}_i^\varepsilon(s),s)\Big|^kds\Big) \nonumber\\ 
=:\ &C(P_{21}(t) + P_{22}(t)).
\end{align}
The Lemma \ref{law of large number} with $\psi^\varepsilon=|\nabla\Phi^\varepsilon|$ and $\theta=0$ implies that
\begin{align}\label{I1121}
P_{21}(t)  
\le\ & N^{-0}\int_0^{t\land\tau_\alpha} \E(Q_\alpha^k(s))ds + C(T)\||\nabla\Phi^\varepsilon|\|_{L^\infty((0,T)\times\R^2)}^k\sup_{0<s<T}\mathbb{P}(\mathcal{B}_{0,|\nabla\Phi^\varepsilon|}^N(s))\nonumber\\
\le\ &\int_0^{t\land\tau_\alpha} \E(Q_\alpha^k(s))ds + C(m, T)\varepsilon^{-2k-4m}N^{-m+1}.
\end{align}
For $P_{22}(t)$, by the same argument as $P_{12}(t)$ in \eqref{I1112}, we have
\begin{align}\label{I1122}
P_{22}(t)=\ & \E\Big(\int_0^{t\land\tau_\alpha}Q_\alpha^k(s)\max_{i=1,\cdots,N}\Big|(|\nabla\Phi^\varepsilon|* u^\varepsilon)(\overline{X}_i^\varepsilon(s),s)\Big|^kds\Big) \nonumber\\ 
\le\ & \| |\nabla\Phi^\varepsilon|*u^\varepsilon\|_{L^\infty((0,T)\times\R^2)}^k\int_0^{t\land\tau_\alpha}\E(Q_\alpha^k(s))ds \le C\int_0^{t\land\tau_\alpha}\E(Q_\alpha^k(s))ds.
\end{align}
Substituting \eqref{I111}, \eqref{I1121}, and \eqref{I1122} into \eqref{I11+}, we obtain
\begin{align}\label{I11}
I_{11}(t)\le C(T)\int_0^{t\land\tau_\alpha}\E(Q_\alpha^k(s))ds + C(m,T)\varepsilon^{-2k-4m}N^{-m+1}.
\end{align}

Therefore, plugging \eqref{I12} and \eqref{I11} into \eqref{EI1i} gives
\begin{align}\label{I1i}
&\E\Big(\int_0^{t\land\tau_\alpha}\frac{1}{N}\sum_{i=1}^N|I_{1,i}(s)|^k ds\Big)\nonumber\\
\le\ & C(T) (1+\varepsilon^{-3k}N^{-\alpha k})\int_0^{t\land\tau_\alpha}\E(Q_\alpha^k(s))ds + C(m, T)\varepsilon^{-2k-4m}N^{-m+1}.
\end{align}
Substituting \eqref{I2i} and \eqref{I1i} into \eqref{ES} yields the inequality
\begin{align}\label{initial ineq}
\E(Q_\alpha^k(t))
\le\ &  C(T) (1+\varepsilon^{-3k}N^{-\alpha k})\int_0^{t\land\tau_\alpha}\E(Q_\alpha^k(s))ds \nonumber\\
&+ C(m, T)(\varepsilon^{-2k-4m}N^{-m+1} +N^{-k\theta} + \varepsilon^{-2k-4m}N^{2m(\theta -1/2)+1}).
\end{align}

We now show that the terms multiplying the constants decay as a negative power of $N$. Recall the scaling $\varepsilon\sim N^{-\gamma},\gamma>0$. First, to ensure the coefficient in the Gr\"{o}nwall integral remains bounded by a constant independent of $N$, we require the exponent to be non-positive:
\begin{align*}
\varepsilon^{-3k}N^{-\alpha k} = N^{3\gamma k - \alpha k}\le N^{0}&\Longleftrightarrow\gamma\le\frac{\alpha}{3}.
\end{align*}

Next, we bound the remaining error terms. After applying Gr\"{o}nwall's inequality to \eqref{initial ineq}, we must multiply the result by $N^{1+\alpha k}$ as dictated by \eqref{MK}. We require a positive exponent $\eta>0$ such that the final multiplied error terms satisfy the following bounds:
{\footnotesize\begin{align*}
N^{1+\alpha k} N^{-k\theta} \le N^{-\eta}&\Longleftrightarrow\eta\le k(\theta-\alpha) - 1, \\
N^{1+\alpha k} \varepsilon^{-2k-4m}N^{-m+1} = N^{\gamma(2k+4m) -m + \alpha k +2}\le N^{-\eta}
&\Longleftrightarrow\eta\le-\gamma(2k+4m) - \alpha k + m - 2,\\
N^{1+\alpha k} \varepsilon^{-2k-4m}N^{2m(\theta -1/2)+1} = N^{\gamma(2k+4m) + m(2\theta -1) + \alpha k + 2}\le N^{-\eta}&\Longleftrightarrow\eta\le-\gamma(2k+4m) -\alpha k +m(1-2\theta)-2.
\end{align*}}
Notice that since $0 < \theta < 1/2$, the third condition is strictly tighter than the second. The condition $\eta>0$ implies that $k$, $m$, and $\gamma$ need to be selected to meet the following relationships. First, to ensure $k(\theta-\alpha) - 1 > 0$, we must pick an integer $k$ satisfying:
\begin{align*}
k > \frac{1}{\theta-\alpha}.
\end{align*}
Having fixed such a $k$, the parameters $m$ and $\gamma$ must be chosen such that:
\begin{align*}
0<\theta<\frac{1}{2},\quad 0<\alpha<\frac{\theta}{2},\quad m>\frac{\alpha k + 2}{1-2\theta},\quad 0<\gamma<\frac{-\alpha k +m(1-2\theta)-2}{2k+4m}. 
\end{align*}
Thus choose $\gamma$ satisfying
$$
0<\gamma<\min\Big\{\frac{\alpha}{3},\frac{-\alpha k +m(1-2\theta)-2}{2k+4m}\Big\},
$$
and $\eta$ satisfying 
$$
0<\eta\le\min\big\{k(\theta-\alpha) - 1, -\gamma(2k+4m) -\alpha k +m(1-2\theta)-2\big\}.
$$ 
Under these conditions, the inequality \eqref{initial ineq} simplifies to the estimate
\begin{align}\label{Sak}
\E(Q_\alpha^k(t))
\le\ &  C(T)\int_0^{t\land\tau_\alpha}\E(Q_\alpha^k(s))ds + C(m,k,T)N^{-\eta-1-\alpha k}.
\end{align}
Using Gr\"{o}nwall's inequality for \eqref{Sak}, we obtain for any $t\in[0,T]$
\begin{align} \label{Egronwall}
\E(Q_\alpha^k(t))\le C(m,k,T)N^{-\eta-1-\alpha k} e^{C(T)t}.  
\end{align}
By inserting \eqref{MK} into \eqref{Egronwall}, we multiply by $N^{1+\alpha k}$ and derive 
\begin{align}
\mathbb{P}\big(\max_{i=1,\cdots,N}\big|(X_{N,i}^\varepsilon - \overline{X}_i^\varepsilon)(t)\big|> N^{-\alpha}\big) \le N^{1+\alpha k}\E(Q_\alpha^k(t)) \le C(m,k,T)e^{C(T)T} N^{-\eta}.
\end{align}
The proof of this theorem is completed.

\end{proof}

\section{$L^1$ convergence for the propagation of chaos}
In this section, we prove Theorem \ref{Propagation of chaos in the strong sense}, which establishes the propagation of chaos in the strong sense. We begin by introducing two key associated PDEs.
Applying It\^o's formula reveals that the associated probability densities of the particle system are governed by the PDEs. Specifically, the interacting particle system \eqref{sde} induces the following Liouville equation (the Kolmogorov forward equation) with solution $u_N^\varepsilon(t, x_1,\cdots,x_N)$ on $\R^{2N}\times[0,T), \forall~ T>0$:
\begin{align}\label{lpde1}
\begin{cases}
\partial_t u_N^\varepsilon
=\sum_{i=1}^N\Delta_{x_i}\Big(\exp\Big(-\frac{1}{N}\sum_{j=1}^N\Phi^\varepsilon(x_i - x_j)\Big)u_N^\varepsilon + u_N^\varepsilon\Big),\\
u_N^\varepsilon(0,x_1,\cdots,x_N) = u_0^{\otimes N}=u_0(x_1)\cdots u_0(x_N).
\end{cases}
\end{align}
The global existence and uniqueness of classical solutions to the linear parabolic problem \eqref{lpde1}, for fixed $0<\varepsilon<1$, follow directly from classical parabolic theory \cite[Chapter 7]{evans2022partial}. 
Additionally, we define the chaotic law ($N$-fold tensor product of $u^\varepsilon$)
$$u^{\varepsilon\otimes N}:=(u^{\varepsilon})^{\otimes N}(t, x_1,\cdots,x_N):= \prod_{i=1}^{N}u^{\varepsilon}(t, x_i),$$
where $u^\varepsilon$ is the solution to \eqref{rpde}. With this definition, one can verify that $u^{\varepsilon\otimes N}$ satisfies the following PDE
\begin{align}\label{lpde2}
\begin{cases}
\partial_t u^{\varepsilon\otimes N}
=\sum_{i=1}^N\Delta_{x_i}(\exp(-\Phi^\varepsilon*u^\varepsilon(t,x_i))u^{\varepsilon\otimes N} + u^{\varepsilon\otimes N}),\\
u^{\varepsilon\otimes N}(0,x_1,\cdots,x_N)=u_0(x_1)\cdots u_0(x_N).
\end{cases}
\end{align}

Then, we introduce the relative entropy, following the definition provided in \cite[Lemma 3.9]{miclo2001genealogies}. For any $l\in \mathbb{N}$ and two probability density functions $p,q:\mathbb{R}^{2l}\to\mathbb{R}^+$, the relative entropy is defined as 
\begin{align}\label{Hm}
\mathcal{H}_l(p|q):=\int_{\R^{2l}}p\log\frac{p}{q}dx_1\cdots dx_l.
\end{align}
We introduce the relative entropy between $u_N^\varepsilon$
and $u^{\varepsilon\otimes N}$, defined as
\begin{align}\label{relative entropy}
\mathcal{H}(u_N^\varepsilon|u^{\varepsilon\otimes N}):=\frac{1}{N}\mathcal{H}_N(u_N^\varepsilon|u^{\varepsilon\otimes N})=\frac{1}{N}\int_{\R^{2N}}u_N^\varepsilon\log\frac{u_N^\varepsilon}{u^{\varepsilon\otimes N}}dx_1\cdots dx_N.
\end{align}
 
Moreover, for any $0<r<N$, the $r$-th marginal is defined as
$$u_{N,r}^{\varepsilon}(t,x_1,\cdots,x_N) :=\int_{\R^{N-r}}u_N^{\varepsilon}(t,x_1,\cdots,x_N)dx_{r+1}\cdots dx_{N}.$$
A key tool for the error estimate between the $r$-th marginal of the solutions to PDE \eqref{lpde1} and the $r$-fold tensor product of the solutions to \eqref{rpde} is the Csiszár-Kullback-Pinsker (C-K-P) inequality \cite{villani2002review}. This inequality demonstrates that their  $L^1(\mathbb{R}^{2r})$ norm of the error estimate can be controlled by relative entropy, i.e., for any $T>0$,
\begin{align}\label{CKP}
\sup_{0<t<T}\|u_{N,r}^{\varepsilon}(t) - u^{\varepsilon\otimes r}(t)\|_{L^1(\R^{2r})}^2 
&\le \sup_{0<t<T}2\mathcal{H}_r(u_{N,r}^{\varepsilon}|u^{\varepsilon\otimes r})(t)\nonumber\\
&\le  \sup_{0<t<T}4r\mathcal{H}(u_{N}^{\varepsilon}|u^{\varepsilon\otimes N})(t).
\end{align}

We now present a rigorous proof of Theorem \ref{Propagation of chaos in the strong sense} 
based on the relative entropy method and the convergence of particle trajectories in probability.
Given that we aim to establish the quantitative propagation of chaos in the norm $L^\infty(0,T^*;L^1(\R^{2r}))$, it is essential to obtain a uniform $L^\infty(0,T^*;H^1(\R^{2}))$ bound for $\nabla\log u^\varepsilon$ in Lemma \ref{weak solution}. Although the statement of Theorem \ref{Propagation of chaos in the strong sense} was provided in the introduction, we restate it in greater detail below for the reader's convenience and provide the specifics of the proof along with relevant derivations. 

\setcounter{theorem}{1}
\begin{theorem}\label{Propagation of chaos in the strong sense}\textup{(Propagation of chaos in the strong sense).} 
Under the assumptions of Lemma \ref{weak solution}, let 
$u_{N}^\varepsilon(t,x_1,\cdots,x_N)$ and $u^{\varepsilon\otimes N}(t,x_1,\cdots,x_N)$ be the solutions of systems \eqref{lpde1} and \eqref{lpde2}, respectively. For any 
$r\in\mathbb{N^+}$, denote by $u_{N,r}^\varepsilon(t,x_1,\cdots,x_r)$ and $u^{\varepsilon\otimes r}(t,x_1,\cdots,x_r)$ their corresponding 
$r$-th marginal distributions. 
Then for parameters $0<\theta<1/2$, $0 <\alpha<\frac{\theta}{2}$, and integers $k,m\in \mathbb{N}^+$ satisfying $k > \frac{1}{\theta-\alpha}$ and $m > \frac{\alpha k+2}{1-2\theta}$, there exist a time $T^*\in(0,T)$, a constant $C(r,m,k,T)>0$, and a parameter $\beta>0$ satisfying the bound $1<\beta\le\min\{\frac{2\alpha}{\gamma}-6,\frac{\eta}{\gamma}-4\}$ such that 
\begin{align}\label{Ps}
\|u_{N,r}^{\varepsilon}(t) - u^{\varepsilon\otimes r}(t)\|_{L^\infty(0,T^*;L^1(\R^{2r}))}^2 \le C(r,m,k,T)\varepsilon^{\beta}, 
  \end{align}

where the cut-off parameter satisfies
$\varepsilon\sim N^{-\gamma} $,
and the parameter $\gamma$ is defined by
$$0<\gamma<\min\Big\{\frac{2\alpha}{7},\frac{-\alpha k + m(1-2\theta)-2}{2k+4m}\Big\},$$
and the parameter $\eta$ satisfies
$$5\gamma<\eta<\min\big\{k(\theta-\alpha)-1, -\gamma(2k+4m)-\alpha k +m(1-2\theta)-2\big\}.$$ 
\end{theorem}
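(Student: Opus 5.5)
The plan is a relative entropy argument between the Liouville equation \eqref{lpde1} and the tensorized equation \eqref{lpde2}. The particle-level error terms are then read as expectations along the coupled trajectories $X_{N,i}^\varepsilon$, $\overline{X}_i^\varepsilon$ of \eqref{sde}–\eqref{mfs}, so that Theorem \ref{propagation of chaos} and Lemma \ref{law of large number} can be applied. The conclusion then follows from the C-K-P inequality \eqref{CKP}. Since both systems start from $u_0^{\otimes N}$, the initial entropy vanishes, and it suffices to prove
\[
\sup_{0\le t\le T^*}\mathcal{H}(u_N^\varepsilon|u^{\varepsilon\otimes N})(t)\le C\varepsilon^{\beta}.
\]

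\textbf{Step 1: entropy dissipation identity.} Write $a_N^i(x)=e^{-\frac1N\sum_j\Phi^\varepsilon(x_i-x_j)}+1$ and $\bar a^i(x)=e^{-\Phi^\varepsilon*u^\varepsilon(t,x_i)}+1$, both bounded between $1$ and $2$. Then \eqref{lpde1} and \eqref{lpde2} read $\partial_t f=\sum_i\nabla_{x_i}\cdot(a^i\nabla_{x_i}f+f\nabla_{x_i}a^i)$. Set $\bar u=u^{\varepsilon\otimes N}$. Differentiating $\int u_N^\varepsilon\log(u_N^\varepsilon/\bar u)$ and completing squares should give
\[
\frac{d}{dt}\mathcal{H}_N\le-\frac12\sum_i\int u_N^\varepsilon a_N^i\Big|\nabla_{x_i}\log\frac{u_N^\varepsilon}{\bar u}\Big|^2+C\sum_i\int u_N^\varepsilon\Big(|a_N^i-\bar a^i|^2|\nabla\log u^\varepsilon(x_i)|^2+|\nabla_{x_i}a_N^i-\nabla_{x_i}\bar a^i|^2\Big).
\]
Because $a_N^i\ge1$, the first term is a nonpositive dissipation and can be discarded. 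On $[0,T^*]$, the bound \eqref{uniformweaksolution1} and the embedding $W^{1,q}\hookrightarrow L^\infty$ for $q>2$ give $\|\nabla\log u^\varepsilon\|_\infty\le C$. The weights $a$ are Lipschitz functions of their exponents with $\varepsilon$-independent constants, since $\|\Phi^\varepsilon*u^\varepsilon\|_\infty\le C$. Hence, after dividing by $N$, the right side is bounded by $C\,\E\big[\frac1N\sum_i(|\Delta^0_i|^2+|\Delta^1_i|^2)\big]$. Here $\Delta^0_i$ is the difference $\frac1N\sum_j\Phi^\varepsilon(X_{N,i}-X_{N,j})-\Phi^\varepsilon*u^\varepsilon(X_{N,i})$, and $\Delta^1_i$ is the analogous difference with $\nabla\Phi^\varepsilon$, all evaluated at $X_N(t)$, whose law is $u_N^\varepsilon$. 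The $\nabla a$ difference also contains a product term, namely $(a_N-\bar a)$ times $\nabla\Phi^\varepsilon*u^\varepsilon$, which is bounded by \eqref{nabphi_infty} and so is absorbed into $\Delta^0_i$.

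\textbf{Step 2: splitting $\Delta^0_i,\Delta^1_i$ through the coupling.} Insert the mean-field particles:
\[
\Delta_i=\Big[\tfrac1N\sum_j\psi(X_{N,i}-X_{N,j})-\psi(\overline X_i-\overline X_j)\Big]+\Big[\tfrac1N\sum_j\psi(\overline X_i-\overline X_j)-\psi*u^\varepsilon(\overline X_i)\Big]+\Big[\psi*u^\varepsilon(\overline X_i)-\psi*u^\varepsilon(X_{N,i})\Big],
\]
with $\psi=\Phi^\varepsilon$ or $\psi=\nabla\Phi^\varepsilon$. The middle term is controlled by \eqref{Elnl} with $m=1$, giving at most $\|\psi\|_\infty^2N^{-1}\le C\varepsilon^{-4}N^{-1}$. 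For the outer terms, split on the event $A=\{\max_i|X_{N,i}-\overline X_i|(t)\le N^{-\alpha}\}$.
\begin{itemize}
\item On $A$, the outer terms are bounded by $\|\nabla\psi\|_\infty N^{-\alpha}$. By \eqref{D2phi}, the worst case ($\psi=\nabla\Phi^\varepsilon$) contributes $\varepsilon^{-6}N^{-2\alpha}\sim\varepsilon^{2\alpha/\gamma-6}$.
\item On $A^c$, bound the outer terms crudely by $\|\psi\|_\infty^2\le C\varepsilon^{-4}$ using \eqref{phi}, and use Theorem \ref{propagation of chaos}: $\mathbb P(A^c)\le CN^{-\eta}\sim\varepsilon^{\eta/\gamma}$. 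This gives $\varepsilon^{\eta/\gamma-4}$.
\end{itemize}
The term $\varepsilon^{-4}N^{-1}=\varepsilon^{1/\gamma-4}$ is dominated by the $A^c$ contribution, since $\eta<1$. The range of $\gamma$ in the theorem is the one for which Theorem \ref{propagation of chaos} applies. The condition $\gamma<2\alpha/7$, together with $\eta>5\gamma$, guarantees that both exponents exceed $1$.

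\textbf{Step 3: integration and conclusion.} Integrate in time over $[0,T^*]$. No Grönwall argument is even needed, since the right side does not involve $\mathcal H$. This yields $\mathcal H(t)\le C\varepsilon^{\beta}$ with $\beta=\min\{2\alpha/\gamma-6,\ \eta/\gamma-4\}$, and \eqref{CKP} gives \eqref{Ps}.

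I expect the main obstacle to be Step 1. There one must justify the entropy computation rigorously: integrations by parts in $\R^{2N}$, and decay of $u_N^\varepsilon$ and of $\log\bar u$, using the second moment bounds in \eqref{uniformweaksolution}. One must also organize the cross terms coming from $\nabla a_N^i$, which contains the sum $\frac1N\sum_j\nabla\Phi^\varepsilon$. Two pieces need care: the $j=i$ self-interaction, and the derivatives $\nabla_{x_i}a_N^k$ for $k\neq i$. Both should either cancel or be of size $O(\varepsilon^{-2}/N)$. If that cancellation fails, I would fall back to the non-divergence form $\sum_i\Delta_{x_i}(a^iu)$ and estimate the cross terms with Young's inequality against the discarded dissipation.
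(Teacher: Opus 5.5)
Your proposal is correct and follows essentially the same route as the paper: the relative-entropy identity between \eqref{lpde1} and \eqref{lpde2} with Young's inequality against the dissipation (your grouping $(a_N^i-\bar a^i)\nabla\log u^\varepsilon(x_i)+\nabla_{x_i}(a_N^i-\bar a^i)$ is just a regrouping of the paper's $J_1$ and $J_2$), error terms rewritten as expectations along the coupled particles, the three-term split with Lemma \ref{law of large number} ($m=1$) for the middle term and the event of Theorem \ref{propagation of chaos} for the outer ones, then time integration without Gr\"onwall and the C-K-P inequality. The obstacle you anticipate does not arise, since only $\nabla_{x_i}a_N^i$ enters the identity and the identity is exact; also, the Lipschitz bound for $e^{-s}$ should be justified by $\Phi^\varepsilon\ge0$ (both exponents are nonnegative), not by the boundedness of $\Phi^\varepsilon*u^\varepsilon$.
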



\begin{proof}
According to the definition of relative entropy \eqref{Hm}, by differentiating it with respect to time, substituting \eqref{lpde1} and \eqref{lpde2}, and applying integration by parts, we obtain the following result
\begin{align}\label{H1}
\frac{d}{dt}\mathcal{H}(u_N^\varepsilon|u^{\varepsilon\otimes N})
=& \frac{1}{N}\int_{\R^{2N}}\Big(\partial_t u_N^\varepsilon(\log\frac{u_N^\varepsilon}{u^{\varepsilon\otimes N}}+1) - \frac{u_N^\varepsilon}{u^{\varepsilon\otimes N}}\partial_t u^{\varepsilon\otimes N}\Big)d_{x_1}\cdots d_{x_N}\nonumber\\
=& -\int_{\R^{2N}}\frac{1}{N}\sum_{i=1}^N\Big(\nabla_{x_i}u_N^\varepsilon\cdot\nabla_{x_i}\log\frac{u_N^\varepsilon}{u^{\varepsilon\otimes N}} - \nabla_{x_i}u^{\varepsilon\otimes N}\cdot\nabla_{x_i}\frac{u_N^\varepsilon}{u^{\varepsilon\otimes N}}\Big)d_{x_1}\cdots d_{x_N}\nonumber\\
&-\int_{\R^{2N}}\Big[\frac{1}{N}\sum_{i=1}^N\Big(\nabla_{x_i}\Big(\exp\big(-\frac{1}{N}\sum_{j=1}^N\Phi^\varepsilon(x_i - x_j)\big)u_N^\varepsilon \Big)\cdot\nabla_{x_i}\log\frac{u_N^\varepsilon}{u^{\varepsilon\otimes N}}\Big)\nonumber \\
&\qquad- \frac{1}{N}\sum_{i=1}^N\Big(\nabla_{x_i}\big(\exp(-\Phi^\varepsilon*u^\varepsilon(x_i,t))u^{\varepsilon\otimes N}\big)\cdot\nabla_{x_i}\frac{u_N^\varepsilon}{u^{\varepsilon\otimes N}}\Big)\Big]d_{x_1}\cdots d_{x_N}\nonumber\\
=:& K_1 + K_2.
\end{align}
Since it follows from a direct computation that
\begin{align*}
&\nabla_{x_i}\frac{u_N^\varepsilon}{u^{\varepsilon\otimes N}} = \frac{u_N^\varepsilon}{u^{\varepsilon\otimes N}}\nabla_{x_i}\log\frac{u_N^\varepsilon}{u^{\varepsilon\otimes N}},~~\text{and}~\nabla_{x_i}u_N^\varepsilon - \nabla_{x_i}u^{\varepsilon\otimes N}\frac{u_N^\varepsilon}{u^{\varepsilon\otimes N}} = u_N^\varepsilon\nabla_{x_i}\log\frac{u_N^\varepsilon}{u^{\varepsilon\otimes N}}, 
\end{align*}
then $K_1$ and $K_2$ can be written as
\begin{align}\label{K1}
K_1=& -\int_{\R^{2N}}\frac{1}{N}\sum_{i=1}^N\Big(\big(\nabla_{x_i}u_N^\varepsilon - \nabla_{x_i}u^{\varepsilon\otimes N}\frac{u_N^\varepsilon}{u^{\varepsilon\otimes N}}\big)\cdot\nabla_{x_i}\log\frac{u_N^\varepsilon}{u^{\varepsilon\otimes N}}\Big)d_{x_1}\cdots d_{x_N}\nonumber\\
=& -\int_{\R^{2N}}\frac{1}{N}\sum_{i=1}^Nu_N^\varepsilon\Big|\nabla_{x_i}\log\frac{u_N^\varepsilon}{u^{\varepsilon\otimes N}}\Big|^2d_{x_1}\cdots d_{x_N},
\end{align}
and
\begin{align*}
K_2
=
&-\int_{\R^{2N}}\frac{1}{N}\sum_{i=1}^N\Big[\Big(\nabla_{x_i}\big(\exp\big(-\frac{1}{N}\sum_{j=1}^N\Phi^\varepsilon(x_i - x_j)\big)u_N^\varepsilon\big)\nonumber\\
&\qquad\qquad\qquad -\frac{u_N^\varepsilon}{u^{\varepsilon\otimes N}}\nabla_{x_i}\big(\exp(-\Phi^\varepsilon*u^\varepsilon(x_i,t))u^{\varepsilon\otimes N}\big)\Big)\cdot\nabla_{x_i}\log\frac{u_N^\varepsilon}{u^{\varepsilon\otimes N}}\Big]d_{x_1}\cdots d_{x_N}.
\end{align*}
For $K_2$, the simple calculation gives
\begin{align}\label{K2}
K_2
=
&-\int_{\R^{2N}}\frac{1}{N}\sum_{i=1}^Nu_N^\varepsilon\Big[\exp\big(-\frac{1}{N}\sum_{j=1}^N\Phi^\varepsilon(x_i - x_j)\big)\nonumber\\
&\qquad\qquad\qquad\cdot\Big(-\frac{1}{N}\sum_{j=1}^N\nabla_{x_i}\Phi^\varepsilon(x_i - x_j) + \nabla_{x_i}\log u_N^\varepsilon\Big) \cdot\nabla_{x_i}\log\frac{u_N^\varepsilon}{u^{\varepsilon\otimes N}}\Big]d_{x_1}
\cdots d_{x_N}\nonumber\\
&+\int_{\R^{2N}}\frac{1}{N}\sum_{i=1}^Nu_N^\varepsilon\Big[\exp(-\Phi^\varepsilon*u^\varepsilon(x_i,t))\nonumber\\
&\qquad\qquad\qquad \cdot\big(-\nabla_{x_i}\Phi^\varepsilon*u^\varepsilon(x_i,t)+\nabla_{x_i}\log u^{\varepsilon\otimes N}\big)\cdot\nabla_{x_i}\log\frac{u_N^\varepsilon}{u^{\varepsilon\otimes N}}\Big]d_{x_1}\cdots d_{x_N}\nonumber\\
=:&K_{21} + K_{22}.
\end{align}
By adding and subtracting a term, the first expression $K_{21}$ leads to the following result.
\begin{align}\label{RHS1}
K_{21} = &-\int_{\R^{2N}}\frac{1}{N}\sum_{i=1}^Nu_N^\varepsilon\Big[\exp\big(-\frac{1}{N}\sum_{j=1}^N\Phi^\varepsilon(x_i - x_j)\big) \Big(\nabla_{x_i}\log u_N^\varepsilon - \nabla_{x_i}\log u^{\varepsilon\otimes N}\nonumber\\
&\qquad\qquad\qquad\qquad\qquad+ \nabla_{x_i}\log u^{\varepsilon\otimes N} -\frac{1}{N}\sum_{j=1}^N\nabla_{x_i}\Phi^\varepsilon(x_i - x_j) \Big) \cdot\nabla_{x_i}\log\frac{u_N^\varepsilon}{u^{\varepsilon\otimes N}}\Big]d_{x_1}
\cdots d_{x_N}\nonumber\\
= &-\int_{\R^{2N}}\frac{1}{N}\sum_{i=1}^Nu_N^\varepsilon\Big[\exp\big(-\frac{1}{N}\sum_{j=1}^N\Phi^\varepsilon(x_i - x_j)\big) \Big|\nabla_{x_i}\log\frac{u_N^\varepsilon}{u^{\varepsilon\otimes N}}\Big|^2\Big]d_{x_1}
\cdots d_{x_N}\nonumber\\
&-\int_{\R^{2N}}\frac{1}{N}\sum_{i=1}^Nu_N^\varepsilon\Big[\exp\big(-\frac{1}{N}\sum_{j=1}^N\Phi^\varepsilon(x_i - x_j)\big) \Big(\nabla_{x_i}\Phi^\varepsilon*u^\varepsilon(x_i,t) -\frac{1}{N}\sum_{j=1}^N\nabla_{x_i}\Phi^\varepsilon(x_i - x_j)\nonumber\\
&\qquad\qquad\qquad
+ \nabla_{x_i}\log u^{\varepsilon\otimes N} - \nabla_{x_i}\Phi^\varepsilon*u^\varepsilon(x_i,t)\Big) \cdot\nabla_{x_i}\log\frac{u_N^\varepsilon}{u^{\varepsilon\otimes N}}\Big]d_{x_1}
\cdots d_{x_N}.
\end{align}

Substituting \eqref{RHS1} into \eqref{K2}, we have
\begin{align}
K_2=&-\int_{\R^{2N}}\frac{1}{N}\sum_{i=1}^Nu_N^\varepsilon\Big[\exp\big(-\frac{1}{N}\sum_{j=1}^N\Phi^\varepsilon(x_i - x_j)\big) \Big|\nabla_{x_i}\log\frac{u_N^\varepsilon}{u^{\varepsilon\otimes N}}\Big|^2\Big]d_{x_1}
\cdots d_{x_N}\nonumber\\
&+J_1+J_2,\label{K21}
\end{align}
where $J_1$ and $J_2$ are defined respectively by
\begin{align}
J_1:= &-\int_{\R^{2N}}\frac{1}{N}\sum_{i=1}^Nu_N^\varepsilon\Big[\exp\big(-\frac{1}{N}\sum_{j=1}^N\Phi^\varepsilon(x_i - x_j)\big) \nonumber\\
&\qquad\qquad\cdot \Big(\nabla_{x_i}\Phi^\varepsilon*u^\varepsilon(x_i,t) -\frac{1}{N}\sum_{j=1}^N\nabla_{x_i}\Phi^\varepsilon(x_i - x_j)\Big) \cdot\nabla_{x_i}\log\frac{u_N^\varepsilon}{u^{\varepsilon\otimes N}}\Big]d_{x_1}
\cdots d_{x_N} \label{J1}
\end{align}
and
\begin{align}
J_2:= &-\int_{\R^{2N}}\frac{1}{N}\sum_{i=1}^Nu_N^\varepsilon\Big[\Big(\exp\big(-\frac{1}{N}\sum_{j=1}^N\Phi^\varepsilon(x_i - x_j)\big) -\exp(-\Phi^\varepsilon*u^\varepsilon(x_i,t))\Big)\nonumber\\
&\qquad\qquad\qquad\qquad\qquad
\cdot\big(\nabla_{x_i}\log u^{\varepsilon\otimes N} - \nabla_{x_i}\Phi^\varepsilon*u^\varepsilon(x_i,t)\big) \cdot\nabla_{x_i}\log\frac{u_N^\varepsilon}{u^{\varepsilon\otimes N}}\Big]d_{x_1}
\cdots d_{x_N}.\label{J2}
\end{align}
For $J_1$, an application of H\"{o}lder's inequality and Young's inequality  gives
\begin{align}\label{J1p}
J_1
\le\ & \frac{1}{2}\int_{\R^{2N}}\frac{1}{N}\sum_{i=1}^Nu_N^\varepsilon\Big[\exp\big(-\frac{1}{N}\sum_{j=1}^N\Phi^\varepsilon(x_i - x_j)\big)\Big|\nabla_{x_i}\log\frac{u_N^\varepsilon}{u^{\varepsilon\otimes N}}\Big|^2\Big]d_{x_1}
\cdots d_{x_N}\nonumber\\ 
&+\frac{1}{2} \Big\|\exp\big(-\frac{1}{N}\sum_{j=1}^N\Phi^\varepsilon(x_i - x_j)\big)\Big\|_{L^\infty(\R^{2})}\nonumber\\
&\qquad\cdot\int_{\R^{2N}}\frac{1}{N}\sum_{i=1}^Nu_N^\varepsilon\Big[\Big|\nabla_{x_i}\Phi^\varepsilon*u^\varepsilon(x_i) -\frac{1}{N}\sum_{j=1}^N\nabla_{x_i}\Phi^\varepsilon(x_i - x_j)\Big|^2\Big]d_{x_1}
\cdots d_{x_N}\nonumber\\
\le\ & \frac{1}{2}\int_{\R^{2N}}\frac{1}{N}\sum_{i=1}^Nu_N^\varepsilon\Big[\exp\big(-\frac{1}{N}\sum_{j=1}^N\Phi^\varepsilon(x_i - x_j)\big)\Big|\nabla_{x_i}\log\frac{u_N^\varepsilon}{u^{\varepsilon\otimes N}}\Big|^2\Big]d_{x_1}
\cdots d_{x_N}\nonumber\\ 
&+C\int_{\R^{2N}}\frac{1}{N}\sum_{i=1}^Nu_N^\varepsilon\Big[\Big|\nabla_{x_i}\Phi^\varepsilon*u^\varepsilon(x_i) -\frac{1}{N}\sum_{j=1}^N\nabla_{x_i}\Phi^\varepsilon(x_i - x_j)\Big|^2\Big]d_{x_1}
\cdots d_{x_N}.
\end{align}
Here, we focus primarily on analyzing the second term on the right-hand side of the inequality
\begin{align}\label{J}
& \int_{\R^{2N}}\frac{1}{N}\sum_{i=1}^Nu_N^\varepsilon\Big[\Big|\nabla_{x_i}\Phi^\varepsilon*u^\varepsilon(x_i,t) -\frac{1}{N}\sum_{j=1}^N\nabla_{x_i}\Phi^\varepsilon(x_i - x_j)\Big|^2\Big]d_{x_1}
\cdots d_{x_N}\nonumber\\  
=\ & \E \Big(\frac{1}{N}\sum_{i=1}^N\Big|\nabla_{x_i}\Phi^\varepsilon*u^\varepsilon(X_{N,i}^\varepsilon(t)) -\frac{1}{N}\sum_{j=1}^N\nabla_{x_i}\Phi^\varepsilon(X_{N,i}^\varepsilon(t) - X_{N,j}^\varepsilon(t))\Big|^2\Big)\nonumber\\
\le\ & 3\E \Big(\frac{1}{N}\sum_{i=1}^N\Big|\nabla_{x_i}\Phi^\varepsilon*u^\varepsilon(X_{N,i}^\varepsilon(t)) -\nabla_{x_i}\Phi^\varepsilon*u^\varepsilon(\overline{X}_i^\varepsilon(t))\Big|^2\Big)\nonumber\\
&+ 3\E \Big(\frac{1}{N}\sum_{i=1}^N\Big|\nabla_{x_i}\Phi^\varepsilon*u^\varepsilon(\overline{X}_i^\varepsilon(t)) -\frac{1}{N}\sum_{j=1}^N\nabla_{x_i}\Phi^\varepsilon(\overline{X}_i^\varepsilon(t) - \overline{X}_j^\varepsilon(t))\Big|^2\Big)\nonumber\\
&+3 \E \Big(\frac{1}{N}\sum_{i=1}^N\Big|\frac{1}{N}\sum_{j=1}^N\nabla_{x_i}\Phi^\varepsilon(\overline{X}_i^\varepsilon(t) - \overline{X}_j^\varepsilon(t)) -\frac{1}{N}\sum_{j=1}^N\nabla_{x_i}\Phi^\varepsilon(X_{N,i}^\varepsilon(t) - X_{N,j}^\varepsilon(t))\Big|^2\Big)\nonumber\\
=:&\, 3(J_{11} + J_{12} + J_{13}).
\end{align}

The second term $J_{12}$ can be directly treated through the Law of Large Numbers in Lemma \ref{law of large number} for $m=1$, leading to
\begin{align}\label{J12}
J_{12} 
\le&C\|\nabla\Phi^\varepsilon\|_{L^\infty(\R^2)}^2N^{-1}\le C\varepsilon^{-4}N^{-1}.
\end{align} 

The estimates of terms $J_{11}$ and $J_{13}$ can be established through probabilistic convergence arguments. For conciseness, we demonstrate the detailed derivation for $J_{13}$, as the analysis for $J_{11}$ follows analogously. Following the notation in Theorem \ref{propagation of chaos}, we define 
$$
\mathcal{A}_\alpha = \big\{\omega\in\Omega:\max_{i=1,\cdots,N}|X_{N,i}^\varepsilon(t) -\overline{X}_i^\varepsilon(t)|>N^{-\alpha}\big\}. $$
That implies that
$$|X_{N,i}^\varepsilon(t) -\overline{X}_i^\varepsilon(t)|\le N^{-\alpha}~~ \text{in}~ (\mathcal{A}_\alpha)^c, $$
which leads to
\begin{align}\label{J13p}
J_{13}=\ &\E \Big(\frac{1}{N}\sum_{i=1}^N\Big|\frac{1}{N}\sum_{j=1}^N\nabla_{x_i}\Phi^\varepsilon(\overline{X}_i^\varepsilon - \overline{X}_j^\varepsilon) -\frac{1}{N}\sum_{j=1}^N\nabla_{x_i}\Phi^\varepsilon(X_{N,i}^\varepsilon - X_{N,j}^\varepsilon)\Big|^2\mathbf{1}_{(\mathcal{A_\alpha})^c}\Big)\nonumber\\
&+ \E \Big(\frac{1}{N}\sum_{i=1}^N\Big|\frac{1}{N}\sum_{j=1}^N\nabla_{x_i}\Phi^\varepsilon(\overline{X}_i^\varepsilon - \overline{X}_j^\varepsilon) -\frac{1}{N}\sum_{j=1}^N\nabla_{x_i}\Phi^\varepsilon(X_{N,i}^\varepsilon - X_{N,j}^\varepsilon)\Big|^2\mathbf{1}_{\mathcal{A_\alpha}}\Big)\nonumber\\
=:& J_{13}^1 +J_{13}^2.
\end{align}
By using H\"{o}lder's inequality and properties of $\Phi^\varepsilon$ in \eqref{phi} and \eqref{D2phi}, we have
\begin{align}\label{J131}
J_{13}^1 \le\ & 2\|D^2\Phi^\varepsilon\|_{L^\infty(\R^2)}^2 \E\big(\frac{1}{N}\sum_{i=1}^N\big|X_{N,i}^\varepsilon(t) -\overline{X}_i^\varepsilon(t)\big|^2\mathbf{1}_{(\mathcal{A_\alpha})^c}\big)\nonumber\\ \le\ & C(T)N^{-2\alpha}\varepsilon^{-6},
\end{align}
and
\begin{align}\label{J132}
J_{13}^2 \le 2\|\nabla\Phi^\varepsilon\|_{L^\infty(\R^2)}^2\sup_{0<s<t}\mathbb{P}(\mathcal{A}_\alpha)
\le C(m, T)N^{-\eta}\varepsilon^{-4}.
\end{align}
By inserting \eqref{J131} and \eqref{J132} into \eqref{J13p}, we derive
\begin{align}\label{J13}
J_{13} \le C(m, T)(N^{-2\alpha}\varepsilon^{-6} + N^{-\eta}\varepsilon^{-4}).
\end{align}

Similarly, $J_{11}$ can be estimated as follows
\begin{align}\label{J11}
J_{11}\le\ & \E \Big(\frac{1}{N}\sum_{i=1}^N\Big|\nabla_{x_i}\Phi^\varepsilon*u^\varepsilon(X_{N,i}^\varepsilon(t)) -\nabla_{x_i}\Phi^\varepsilon*u^\varepsilon(\overline{X}_i^\varepsilon(t))\Big|^2\mathbf{1}_{(\mathcal{A_\alpha})^c}\Big)\nonumber\\
&+ \E \Big(\frac{1}{N}\sum_{i=1}^N\Big|\nabla_{x_i}\Phi^\varepsilon*u^\varepsilon(X_{N,i}^\varepsilon(t)) -\nabla_{x_i}\Phi^\varepsilon*u^\varepsilon(\overline{X}_i^\varepsilon(t))\Big|^2\mathbf{1}_{\mathcal{A_\alpha}}\Big)\nonumber\\
\le\ & 2\|D^2\Phi^\varepsilon\|_{L^\infty(\R^2)}^2 \E\big(\frac{1}{N}\sum_{i=1}^N\big|X_{N,i}^\varepsilon(t) -\overline{X}_i^\varepsilon(t)\big|^2\mathbf{1}_{(\mathcal{A_\alpha})^c}\big)\nonumber\\
&+ 2\|\nabla\Phi^\varepsilon*u^\varepsilon\|_{L^\infty(\R^2)}^2\sup_{0<s<t}\mathbb{P}(\mathcal{A}_\alpha) \nonumber\\
\le\ & C(m,T)(N^{-2\alpha}\varepsilon^{-6} + N^{-\eta}).
\end{align}
Through the substitution of \eqref{J12}, \eqref{J13} and \eqref{J11} in \eqref{J1p} we establish that for $0<\varepsilon<1$,
\begin{align}\label{J1}
J_1
\le\ & \frac{1}{2}\int_{\R^{2N}}\frac{1}{N}\sum_{i=1}^Nu_N^\varepsilon\Big[\exp\big(-\frac{1}{N}\sum_{j=1}^N\Phi^\varepsilon(x_i - x_j)\big)\Big|\nabla_{x_i}\log\frac{u_N^\varepsilon}{u^{\varepsilon\otimes N}}\Big|^2\Big]d_{x_1}
\cdots d_{x_N}\nonumber\\ 
& + C(m, T)( N^{-2\alpha}\varepsilon^{-6} +  (N^{-1} + N^{-\eta})\varepsilon^{-4}).
\end{align}

In what follows, we derive an estimate for $J_2$
\begin{align*}
J_2\le  \big\|\nabla_{x_i}\log u^{\varepsilon\otimes N} - \nabla_{x_i}\Phi^\varepsilon*u^\varepsilon(x_i,t)\big\|_{L^\infty(\R^2)}
&\int_{\R^{2N}}\frac{1}{N}\sum_{i=1}^Nu_N^\varepsilon\Big[\Big|\exp\big(-\frac{1}{N}\sum_{j=1}^N\Phi^\varepsilon(x_i - x_j)\big)\nonumber\\
&\quad-\exp(-\Phi^\varepsilon*u^\varepsilon(x_i))\Big|
\Big|\nabla_{x_i}\log\frac{u_N^\varepsilon}{u^{\varepsilon\otimes N}}\Big|\Big]d_{x_1}
\cdots d_{x_N}.
\end{align*}
Since
\begin{align*}
&\nabla_{x_i}\log u^{\varepsilon\otimes N} = \nabla_{x_i}\log(\prod_{j=1}^Nu^\varepsilon(x_j)) = \nabla_{x_i}(\sum_{j=1}^N\log u^\varepsilon(x_j)) =\nabla_{x_i}\log u^\varepsilon(x_i),\\
&\|\nabla\Phi\|_{L^q(\R^2)}\le
C,~q\in[1,2),\quad\|\nabla\Phi^\varepsilon\|_{L^\frac{4}{3}(\R^2)}\le\|\nabla\Phi\|_{L^\frac{4}{3}(\R^2)}\|j^\varepsilon\|_{L^1(\R^2)}\le C,  
\end{align*}
together with (\ref{uniformweaksolution1}) and the Sobolev embedding theorem, we have for any $0<t<T^*<T$,
\begin{align*}
&\|\nabla_{x_i}\log u^{\varepsilon\otimes N}\|_{L^\infty(\R^2)}\le \|\nabla_{x_i}\log u^\varepsilon\|_{L^\infty(\R^2)}\le C,\\
&\|\nabla\Phi^\varepsilon * u^\varepsilon\|_{L^\infty(\R^2)}\le\|\nabla\Phi^\varepsilon\|_{L^\frac{4}{3}(\R^2)}\|u^\varepsilon\|_{L^4(\R^2)}\le C.
\end{align*}
Following the same approach as in \eqref{J1p} and applying H\"{o}lder's inequality, we obtain
\begin{align}\label{J2p}
J_2
\le\ & \frac{1}{2}\int_{\R^{2N}}\frac{1}{N}\sum_{i=1}^Nu_N^\varepsilon\Big|\nabla_{x_i}\log\frac{u_N^\varepsilon}{u^{\varepsilon\otimes N}}\Big|^2d_{x_1}
\cdots d_{x_N}\nonumber\\ 
&+ C\int_{\R^{2N}}\frac{1}{N}\sum_{i=1}^Nu_N^\varepsilon\Big[\Big|\nabla_{x_i}\Phi^\varepsilon*u^\varepsilon(x_i,t) -\frac{1}{N}\sum_{j=1}^N\nabla_{x_i}\Phi^\varepsilon(x_i - x_j)\Big|^2\Big]d_{x_1}
\cdots d_{x_N}. 
\end{align}

Since the computation here is similar to $J_1$, we directly present the result
\begin{align}\label{J2}
J_2 \le\ & \frac{1}{2}\int_{\R^{2N}}\frac{1}{N}\sum_{i=1}^Nu_N^\varepsilon\Big|\nabla_{x_i}\log\frac{u_N^\varepsilon}{u^{\varepsilon\otimes N}}\Big|^2d_{x_1}
\cdots d_{x_N}\nonumber\\ 
& + C(m, T)(N^{-2\alpha}\varepsilon^{-6} + (N^{-1} + N^{-\eta})\varepsilon^{-4}).
\end{align}

Plugging \eqref{J1} and \eqref{J2} into \eqref{K21} produces the following
\begin{align}
K_2\leq &-\int_{\R^{2N}}\frac{1}{N}\sum_{i=1}^Nu_N^\varepsilon\Big[\exp\big(-\frac{1}{N}\sum_{j=1}^N\Phi^\varepsilon(x_i - x_j)\big) \Big|\nabla_{x_i}\log\frac{u_N^\varepsilon}{u^{\varepsilon\otimes N}}\Big|^2\Big]d_{x_1}
\cdots d_{x_N}\nonumber\\
&+ C(m, T)(N^{-2\alpha}\varepsilon^{-6} + (N^{-1} + N^{-\eta})\varepsilon^{-4}).\label{K_2}
\end{align}
Combining \eqref{K1} and \eqref{H1}, we derive
\begin{align}
\frac{d}{dt}\mathcal{H}(u_N^\varepsilon|u^{\varepsilon\otimes N})
\leq& -\frac{1}{2}\int_{\R^{2N}}\frac{1}{N}\sum_{i=1}^Nu_N^\varepsilon\Big|\nabla_{x_i}\log\frac{u_N^\varepsilon}{u^{\varepsilon\otimes N}}\Big|^2d_{x_1}\cdots d_{x_N}\nonumber\\
&-\frac{1}{2}\int_{\R^{2N}}\frac{1}{N}\sum_{i=1}^Nu_N^\varepsilon\Big[\exp\big(-\frac{1}{N}\sum_{j=1}^N\Phi^\varepsilon(x_i - x_j)\big) \Big|\nabla_{x_i}\log\frac{u_N^\varepsilon}{u^{\varepsilon\otimes N}}\Big|^2\Big]d_{x_1}
\cdots d_{x_N}\nonumber\\
& + C(m, T)(N^{-2\alpha}\varepsilon^{-6} + (N^{-1} + N^{-\eta})\varepsilon^{-4}).\label{entropy}
\end{align}
Noticing that the initial relative entropy is zero, it can be obtained from (\ref{entropy}) that for any $0<t<T^*<T$, 
$$
\mathcal{H}(u_{N}^{\varepsilon}|u^{\varepsilon\otimes N})(t)\le C(m, T)(N^{-2\alpha}\varepsilon^{-6} + (N^{-1} + N^{-\eta})\varepsilon^{-4}).
$$
Combing \eqref{relative entropy} and the C-K-P inequality \eqref{CKP}, we have for $r\in\mathbb{N}$, 
\begin{align}
\sup_{t\in(0,T^*)}\|u_{N,r}^{\varepsilon}(t) - u^{\varepsilon\otimes r}(t)\|_{L^1(\R^{2r})}^2
&\le \sup_{t\in(0,T^*)}2\mathcal{H}_r(u_{N,r}^{\varepsilon}|u^{\varepsilon\otimes r})(t)\nonumber\\
&\le \sup_{t\in(0,T^*)}4r\mathcal{H}(u_{N}^{\varepsilon}|u^{\varepsilon\otimes N})(t)\nonumber\\
&\le C(m, T)(N^{-2\alpha}\varepsilon^{-6} + (N^{-1} + N^{-\eta})\varepsilon^{-4}).
\end{align}

According to Theorem \ref{propagation of chaos}, we know that 
\begin{align}
&0<\eta\le\min\big\{k(\theta-\alpha)-1, -\gamma(2k+4m)-\alpha k +m(1-2\theta)-2\big\},\label{eta1}\\
&0<\gamma<\min\Big\{\frac{\alpha}{3},\frac{-\alpha k +m(1-2\theta)-2}{2k+4m}\Big\}. \label{gamma1} 
\end{align}
Based on the previous assumption $\varepsilon\sim N^{-\gamma}$, it holds that there exists $\beta>0$ such that 
\begin{align}
N^{-2\alpha}\varepsilon^{-6} + (N^{-1} + N^{- \eta})\varepsilon^{-4}=\varepsilon^\frac{2\alpha}{\gamma}\varepsilon^{-6}+(\varepsilon^\frac{1}{\gamma}+\varepsilon^\frac{\eta}{\gamma})\varepsilon^{-4}\le C\varepsilon^\beta, 
\end{align}
where $\beta$ satisfies
$$1<\beta\le\min\Big\{\frac{2\alpha}{\gamma}-6,\frac{\eta}{\gamma}-4\Big\}.$$ 
Here we have assumed $\eta \le 1$ without loss of generality (since if $\eta > 1$, the convergence rate would simply be dominated by the $N^{-1}$ term).
To ensure $\beta>1$, this further requires that $\frac{2\alpha}{\gamma} - 6 > 1 \implies 0<\gamma<\frac{2\alpha}{7}$, and $\frac{\eta}{\gamma} - 4 > 1 \implies \eta>5\gamma$. Building on the assumptions in \eqref{eta1} and \eqref{gamma1}, we assume
\begin{align*}
&0<\gamma<\min\Big\{\frac{2\alpha}{7},\frac{-\alpha k +m(1-2\theta)-2}{2k+4m}\Big\},\\
&5\gamma<\eta<\min\big\{k(\theta-\alpha)-1, -\gamma(2k+4m)-\alpha k +m(1-2\theta)-2\big\}.
\end{align*} 
Then we obtain that for  $0<t<T^*<T$, 
\begin{align}
\sup_{t\in(0,T^*)}\|u_{N,r}^{\varepsilon}(t) - u^{\varepsilon\otimes r}(t)\|_{L^1(\R^{2r})}^2 \le C(m,k,r,T)\varepsilon^\beta,\quad \beta>1.
\end{align}
Thus, we complete the proof of propagation of chaos in the strong sense.
\end{proof}
In addition, we have discovered the following lemma given in \cite{BOL2026113712}.
\begin{lemma}\textup{(\cite[Theorem 2]{BOL2026113712})}
Under the assumptions of Lemma \ref{weak solution}. Then the problems \eqref{pde} and \eqref{rpde} possesses weak solutions $(u,v)$ and $(u^\varepsilon, v^\varepsilon)$ in $[0, T ]$ satisfying
\begin{align}
\|u^\varepsilon-u\|_{L^\infty(0,T;L^1(\R^2))}^2 \le C\varepsilon.
\end{align}
\end{lemma}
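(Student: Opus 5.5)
The plan is to run a relative entropy argument at the PDE level between $u^\varepsilon$ and $u$, and close it with the Csisz\'ar--Kullback--Pinsker inequality as in \eqref{CKP}. Set $\mathcal{H}(t):=\int_{\R^2}u^\varepsilon\log\frac{u^\varepsilon}{u}\,dx$. Then
$$\|u^\varepsilon(t)-u(t)\|_{L^1(\R^2)}^2\le 2\mathcal{H}(t),$$
so it suffices to prove $\mathcal{H}(t)\le C\varepsilon$ on $[0,T]$.

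\emph{Step 1: the initial entropy.} Since $u^\varepsilon(0)=u_0*j^\varepsilon$, we have $\log\frac{u_0(x-y)}{u_0(x)}\le\|\nabla\log u_0\|_{L^\infty}|y|$. Here $\|\nabla\log u_0\|_{L^\infty}\le CM_0$ by the Sobolev embedding $W^{1,q}\subset L^\infty$ for $q>2$. By Jensen's inequality (convexity of $s\mapsto s\log s$),
$$\mathcal{H}(0)\le\iint j^\varepsilon(y)u_0(x-y)\log\frac{u_0(x-y)}{u_0(x)}\,dy\,dx\le CM_0\int|y|\,j^\varepsilon(y)\,dy\le C\varepsilon.$$
This is the source of the single power of $\varepsilon$.

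\emph{Step 2: the entropy dissipation identity.} Write both equations in the form $\partial_t u=\nabla\cdot\big((e^{-v}+1)u\,(\nabla\log u-\tfrac{e^{-v}}{e^{-v}+1}\nabla v)\big)$. Differentiating $\mathcal{H}$ exactly as in the computation of $K_1,K_2$ in the proof of Theorem \ref{Propagation of chaos in the strong sense} gives a good term
$$-\int u^\varepsilon(e^{-v^\varepsilon}+1)\Big|\nabla\log\frac{u^\varepsilon}{u}\Big|^2dx.$$
It also gives cross terms of the form
$$\int u^\varepsilon\Big[(e^{-v^\varepsilon}-e^{-v})\big(\nabla\log u-\nabla v\big)+e^{-v^\varepsilon}\nabla(v-v^\varepsilon)\Big]\cdot\nabla\log\frac{u^\varepsilon}{u}\,dx .$$
By Young's inequality, half of the good term absorbs these. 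What remains is $C\int u^\varepsilon\big(|v^\varepsilon-v|^2+|\nabla(v^\varepsilon-v)|^2\big)dx$, provided $\|\nabla\log u\|_{L^\infty}$ and $\|\nabla v\|_{L^\infty}$ are bounded. We use $v\ge0$, so $|e^{-v^\varepsilon}-e^{-v}|\le|v^\varepsilon-v|$. The bound on $\nabla v$ follows from $u\in L^\infty_tL^p$, as in the estimate of $\nabla\Phi^\varepsilon*u^\varepsilon$. The bound on $\nabla\log u$ is obtained by passing to the limit $\varepsilon\to0$ in \eqref{uniformweaksolution1}.

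\emph{Step 3: closing with Gr\"onwall.} We have $v^\varepsilon-v=\Phi*(u^\varepsilon*j^\varepsilon-u)$, and we split
$$u^\varepsilon*j^\varepsilon-u=(u^\varepsilon*j^\varepsilon-u^\varepsilon)+(u^\varepsilon-u).$$
The mollification part is controlled in $H^{-1}$ by $C\varepsilon\|u^\varepsilon\|_{L^2}$, hence contributes $O(\varepsilon^2)$ after time integration using \eqref{uniformweaksolution}. For the $v$ part we use $\|\Phi*f\|_{L^2}\le\|\Phi\|_{L^2}\|f\|_{L^1}$ together with $\|u^\varepsilon\|_{L^\infty}\le C$, which bounds it by $C\|u^\varepsilon-u\|_{L^1}^2\le C\mathcal{H}$.

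\emph{Main obstacle.} The gradient term $\|\nabla\Phi*(u^\varepsilon-u)\|_{L^2}$ is the main difficulty, because $\nabla\Phi\notin L^2(\R^2)$. Young's inequality only yields $\|\nabla\Phi\|_{L^q}\|u^\varepsilon-u\|_{L^s}$ with $s>1$. Interpolating between $L^1$ and the uniform $L^\infty$ bound then produces $\mathcal{H}^{1/s}$, which is sublinear and spoils a Gr\"onwall argument at rate $\varepsilon$. My first attempt would avoid a pure $L^1$ bound on this term. Instead, I would integrate by parts once more so that $\nabla(v^\varepsilon-v)$ is paired against $\nabla\log\frac{u^\varepsilon}{u}$ only through $v^\varepsilon-v$ itself. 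This uses the elliptic identity $-\Delta(v^\varepsilon-v)=\chi(u^\varepsilon*j^\varepsilon-u)-(v^\varepsilon-v)$, after which every term is controlled by $\mathcal{H}$, the dissipation, and $O(\varepsilon)$ remainders.

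If this fails, the fallback is a log-Lipschitz (Osgood) estimate $\|\nabla\Phi*f\|_{L^2}^2\le C\|f\|_{L^1}^2\log\big(C/\|f\|_{L^1}\big)$, which would give $\varepsilon^{1-\delta}$. A second issue is the time horizon: $\nabla\log u$ is only controlled on $[0,T^*]$. For the full interval $[0,T]$, I would either accept $T^*$ or replace the entropy by an $H^{-1}$ (Wasserstein-type) stability estimate that does not need $\nabla\log u$, and then recover $L^1$ by interpolation with the uniform $H^1$ and second-moment bounds in \eqref{uniformweaksolution}.
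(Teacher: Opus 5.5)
\textbf{There is a genuine gap, and it is at exactly the step you flag.} The paper gives no proof of this lemma: it imports it from \cite[Theorem 2]{BOL2026113712}. Since the lemma is stated on all of $[0,T]$, it cannot rest on \eqref{uniformweaksolution1}, which holds only up to $T^*$. Neither of your remedies closes the gap.

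\emph{Why the entropy estimate does not close.} After Young's inequality you must bound $\int u^\varepsilon|\nabla(v^\varepsilon-v)|^2\,dx$, essentially $\|u^\varepsilon*j^\varepsilon-u\|_{H^{-1}}^2$, by $C\mathcal{H}+O(\varepsilon)$. Through Csisz\'ar--Kullback--Pinsker the entropy only gives $L^1$ control. Passing from $L^1\cap L^\infty$ to this $H^{-1}$-type norm loses a logarithm in two dimensions.

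\emph{Why integrating by parts does not help.}
\begin{itemize}
\item Moving the derivative onto $u^\varepsilon e^{-v^\varepsilon}\nabla\log\frac{u^\varepsilon}{u}$ produces second derivatives of $\log\frac{u^\varepsilon}{u}$, which the dissipation does not control.
\item Moving it off $\log\frac{u^\varepsilon}{u}$ and inserting $-\Delta(v^\varepsilon-v)=\chi(u^\varepsilon*j^\varepsilon-u)-(v^\varepsilon-v)$ removes the small factor $\nabla\log\frac{u^\varepsilon}{u}$ that made the term quadratic. Two terms remain:
\begin{itemize}
\item $\chi\int u^\varepsilon e^{-v^\varepsilon}(u^\varepsilon*j^\varepsilon-u)\log\frac{u^\varepsilon}{u}\,dx$. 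Its main part is nonnegative and is bounded only by the symmetrized quantity $\mathcal{H}+\int u\log\frac{u}{u^\varepsilon}\,dx$, not by $\mathcal{H}$ alone.
\item $\int\log\frac{u^\varepsilon}{u}\,\nabla(u^\varepsilon e^{-v^\varepsilon})\cdot\nabla(v^\varepsilon-v)\,dx$, which still contains $\nabla(v^\varepsilon-v)$, now against a weight that is not small.
\end{itemize}
\end{itemize}

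\emph{The fallbacks.} The Osgood fallback gives $\mathcal{H}(t)\lesssim\varepsilon^{e^{-Ct}}$, which is strictly worse than $C\varepsilon$. Moreover, the entropy computation needs $\|\nabla\log u\|_{L^\infty}$, so even if it closed it would only cover $[0,T^*]$.

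\textbf{The missing idea} is to drop the entropy and prove an $L^2$ stability estimate for $w:=u^\varepsilon-u$. This needs neither $\nabla\log u$ nor Csisz\'ar--Kullback--Pinsker.

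\emph{Bounds on the signal.} You have $\Phi\in L^2$, $\nabla\Phi\in L^1\cap L^{4/3}$ and $\|\Phi*j^\varepsilon-\Phi\|_{L^1}\le C\varepsilon$. The mollification part is $O(\varepsilon)$ in $H^{-1}$, as in your Step 3. Together these give
\begin{equation*}
\|v^\varepsilon-v\|_{L^\infty}+\|\nabla(v^\varepsilon-v)\|_{L^2}\le C(\varepsilon+\|w\|_{L^2}),\qquad \|\nabla v^\varepsilon\|_{L^\infty}+\|\nabla v\|_{L^\infty}\le C .
\end{equation*}

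\emph{Energy estimate.} Test $\partial_t w=\Delta\big((1+e^{-v^\varepsilon})w+(e^{-v^\varepsilon}-e^{-v})u\big)$ with $w$ and use $1+e^{-v^\varepsilon}\ge1$. This yields
\begin{equation*}
\frac{d}{dt}\|w\|_{L^2}^2+\|\nabla w\|_{L^2}^2\le C\big(1+\|\nabla u\|_{L^2}^2\big)\big(\|w\|_{L^2}^2+\varepsilon^2\big).
\end{equation*}
Here $\|\nabla u\|_{L^2}^2\in L^1(0,T)$, inherited by $u$ from \eqref{uniformweaksolution}, and $\|w(0)\|_{L^2}\le C\varepsilon\|\nabla u_0\|_{L^2}$. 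Gr\"onwall then gives $\|w\|_{L^\infty(0,T;L^2)}\le C\varepsilon$ on the whole interval.

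\emph{Back to $L^1$.} Use the Cauchy--Schwarz bound $\|w\|_{L^1}^2\le\|(1+|x|^2)^{-1}\|_{L^2}\,\|w\|_{L^2}\,\|(1+|x|^2)w\|_{L^1}$ together with the second-moment bound in \eqref{uniformweaksolution}. This gives exactly $\|w\|_{L^\infty(0,T;L^1)}^2\le C\varepsilon$.

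Your last fallback points in this direction. However, an $H^{-1}$ estimate interpolated against $L^2(0,T;H^1)$ does not give a bound uniform in time, so the energy estimate has to be done in $L^2$ itself. Your Step 1 is correct but becomes unnecessary.
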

Consequently, by the triangle inequality we have the following result.
\begin{corollary}
Under the assumptions of Theorem \ref{Propagation of chaos in the strong sense}, there exists a constant $C > 0$ independent of $\varepsilon$ and $N$ such that
\begin{align}
\|u_{N,1}^{\varepsilon} - u\|_{L^\infty(0,T^*;L^1(\R^{2}))}^2 \le C\varepsilon,
\end{align}
where $\varepsilon \sim N^{-\gamma}$ for some $0<\gamma<\min\Big\{\frac{2\alpha}{7},\frac{-\alpha k + m(1-2\theta)-2}{2k+4m}\Big\}$.
\end{corollary}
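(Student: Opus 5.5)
The corollary follows from the triangle inequality in $L^1(\R^2)$, applied for each fixed $t\in(0,T^*)$ through the intermediate density $u^\varepsilon(t)$:
\begin{align*}
\|u_{N,1}^{\varepsilon}(t) - u(t)\|_{L^1(\R^2)}^2 \le 2\|u_{N,1}^{\varepsilon}(t) - u^{\varepsilon}(t)\|_{L^1(\R^2)}^2 + 2\|u^{\varepsilon}(t) - u(t)\|_{L^1(\R^2)}^2 .
\end{align*}
I then take the supremum over $t\in(0,T^*)$ and bound the two terms separately.

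For the first term I apply Theorem \ref{Propagation of chaos in the strong sense} with $r=1$. Here $u^{\varepsilon\otimes 1}=u^\varepsilon$, and the marginal $u_{N,1}^\varepsilon$ is the law of $X_{N,1}^\varepsilon$. This gives $\sup_{t\in(0,T^*)}\|u_{N,1}^{\varepsilon}(t)-u^\varepsilon(t)\|_{L^1}^2\le C(1,m,k,T)\varepsilon^{\beta}$ with $\beta>1$. The parameters are chosen exactly as in that theorem: $\gamma<\min\{\frac{2\alpha}{7},\frac{-\alpha k+m(1-2\theta)-2}{2k+4m}\}$, and $\eta\in(5\gamma,\cdot)$ is admissible. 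Since $0<\varepsilon<1$ and $\beta>1$, we have $\varepsilon^\beta\le\varepsilon$, so this term is $O(\varepsilon)$.

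For the second term I invoke the last lemma above (\cite[Theorem 2]{BOL2026113712}). Its bound holds on all of $[0,T]\supset(0,T^*)$, so $\sup_{t\in(0,T^*)}\|u^\varepsilon(t)-u(t)\|_{L^1}^2\le C\varepsilon$. Adding the two bounds gives $\|u_{N,1}^\varepsilon-u\|^2_{L^\infty(0,T^*;L^1)}\le C\varepsilon$ with $C$ independent of $N$ and $\varepsilon$. The coupling $\varepsilon\sim N^{-\gamma}$ is needed only to make the first term apply.

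There is no real obstacle here. The only points to check are that the solutions $u$ and $u^\varepsilon$ in the lemma are the same ones used in Theorem \ref{Propagation of chaos in the strong sense}; both come from Lemma \ref{weak solution}, and uniqueness for \eqref{rpde} ensures they agree. One should also note that the constant from the theorem depends on $m,k,T$, which are fixed, so it is absorbed into $C$. The resulting rate $\varepsilon$ is dictated by the PDE regularization error rather than by the particle approximation.
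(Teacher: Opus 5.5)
Your proposal is correct and matches the paper's argument. The paper proves the corollary in one line, by the triangle inequality through $u^\varepsilon$. It combines Theorem~\ref{Propagation of chaos in the strong sense} with $r=1$ (where $\varepsilon^\beta\le\varepsilon$ because $\beta>1$ and $0<\varepsilon<1$) with the regularization estimate $\|u^\varepsilon-u\|_{L^\infty(0,T;L^1(\R^2))}^2\le C\varepsilon$ from \cite[Theorem 2]{BOL2026113712}. Your use of $(a+b)^2\le 2a^2+2b^2$ and your check that the two results use the same $u^\varepsilon$ just spell out details the paper leaves implicit.
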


\section{Interest statement}
On behalf of all authors, the corresponding author states that there is no conflict of interest.

\bibliographystyle{plain}
\bibliography{PRE2DKS.bib}

\end{document}